\numberwithin{equation}{section}
\newtheorem{theorem}{Theorem}[section]
\newtheorem{lemma}[theorem]{Lemma}
\newtheorem{corollary}[theorem]{Corollary}
\newtheorem{proposition}[theorem]{Proposition}
\theoremstyle{definition}
\newtheorem{definition}[theorem]{Definition}
\newtheorem{construction}[theorem]{Construction}
\newtheorem{notation}[theorem]{Notation}
\newtheorem{example}[theorem]{Example}
\theoremstyle{remark}
\newtheorem{remark}[theorem]{Remark}
\newtheorem*{remark*}{Remark}
\newcommand{\field}[1]{\ensuremath{\mathbb{#1}}}
\newcommand{\rational}{\field{Q}}
\newcommand{\integer}{\field{Z}}
\DeclareMathOperator{\lcm}{lcm}
\newcommand{\invop}[1]{\mathbin{\overline{#1}}}
\newcommand{\bstar}{\invop{*}}
\newcommand{\rt}{\vartriangleright}
\newcommand{\lt}{\vartriangleleft}
\newcommand{\rtb}{\blacktriangleright}
\newcommand{\ltb}{\blacktriangleleft}
\newcommand{\zttinv}{\ensuremath{\integer[t,t^{-1}]}}
\newcommand{\zt}{\ensuremath{\integer[t]}}
\newcommand{\binseq}{\ensuremath{\mathcal{B}}}
\newcommand{\avgquandle}[2][\rational]{\ensuremath{#1_{#2}}}
\newcommand{\rght}[1]{\rho_{#1}}
\begin{document}
\title[The rack congruence condition]{The rack congruence condition \\ and half congruences in racks}

\author{Wayne Burrows}
\address{School of Mathematical and Computational Sciences, Massey University,
         Private Bag 11 222, Palmerston North 4442, New Zealand}
\email{wjburrows@gmail.com}

\author{Christopher Tuffley}
\address{School of Mathematical and Computational Sciences, Massey University,
         Private Bag 11 222, Palmerston North 4442, New Zealand}
\email{c.tuffley@massey.ac.nz}

\subjclass[2020]{20N02}
\keywords{Self-distributive, rack congruence, quandle congruence, quotient rack, quotient quandle, Alexander quandle}

\begin{abstract}
Racks and quandles are algebraic structures with a single binary operation that is right self-distributive and right invertible, and additionally idempotent in the case of quandles. The invertibility condition is equivalent to the existence of a second binary operation that acts as a right inverse to the first, so that racks and quandles may also be viewed as algebraic structures with a pair of (dependent) binary operations.

When forming a quotient rack or quandle it is necessary to take this two-operation view, and define a \emph{congruence} as an equivalence relation on the rack or quandle that respects both operations. However, in defining a congruence some authors have omitted the condition on the inverse operation, and defined a congruence as an equivalence relation respecting the primary operation only. We show that while respecting the primary operation is sufficient in the case of finite racks and quandles, it is not in general sufficient in the infinite case. We do this by constructing explicit examples of ``half congruences'': equivalence relations that respect exactly one of the two operations. 

Our work also allows us to completely characterise congruences in connected Alexander quandles.
\end{abstract}

\maketitle

\section{Introduction}

A rack $(R,\rt)$ is an algebraic structure consisting of a set $R$ together with a single binary operation $\rt$ on $R$ that is right self-distributive and right invertible: for all $x,y,z\in R$ we have
\begin{equation}
\label{eq:rightselfdistributivity}
(x\rt y)\rt z = (x\rt z)\rt (y\rt z),
\end{equation}
and for all $y,z\in R$ there exists a unique $x\in R$ such that $x\rt y=z$. A quandle is a rack in which the binary operation is also idempotent: for all $x\in R$ we have $x\rt x=x$. The quandle axioms can be viewed as axiomatising the properties of conjugation in groups~\cite{joyce-1982}, and as algebraically encoding the Reidemeister moves in knot theory.
See \cite{carter-2012,elhamdadi-2020,fenn-rourke-1992}
for a history and survey of racks and quandles.

Right invertibility is equivalent to the existence of a second binary operation $\lt$ that acts as a right inverse to $\rt$, in the sense that for all $x,y\in R$ we have
\begin{align}
\label{eq:inverseop}
(x\rt y)\lt y &= x, &
(x\lt y)\rt y &= x.
\end{align}
Indeed, some authors such as Joyce~\cite{joyce-1982} define a quandle as a set with two binary operations $\rt$ and $\lt$ satisfying~\eqref{eq:rightselfdistributivity},~\eqref{eq:inverseop} and the idempotence axiom $x\rt x=x$ for all $x$. The inverse operation turns out to be right self-distributive as well, and so $(R,\lt)$ is also a rack, and a quandle when $(R,\rt)$ is.

The inverse operation plays an important role in the theory: for example, when defining the orbit of an element it is necessary to consider the effect of both rack operations. However, the view of racks and quandles as algebraic structures with a single binary operation appears on occasion to have led to the inverse operation being omitted from statements where it needs to appear. One place where this has sometimes occurred is in the definition of subracks and subquandles; a second is in the definition of congruences, which are equivalence relations used to construct quotient racks and quandles.
It is this second setting that is our focus in this paper.

In the familiar settings of groups and rings we form quotients by starting with a suitable substructure: a normal subgroup in the case of groups, and an ideal in the case of rings. The substructure is used to define an equivalence relation on the group or ring, and the elements of the quotient structure are the equivalence classes of this relation --- the cosets of the chosen substructure. The conditions imposed on the substructure ensure that the induced operations on equivalence classes are well defined. 
In contrast, in the setting of racks and quandles quotients do not involve a choice of substructure, and instead we must work directly with suitably chosen equivalence relations. A necessary first step then is to determine what conditions must be imposed on the equivalence relation in order for the quotient to again be a rack or quandle. 

The required condition is that the equivalence relation must respect both rack operations, in the sense that
\[
\text{$a\sim c$ and $b\sim d$} \qquad\Rightarrow\qquad\text{$a\rt b\sim c\rt d$ and $a\lt b\sim c\lt d$}.
\]
An equivalence relation satisfying this condition is called a \emph{rack congruence}. While some authors such as Bonatto and Stanovsk\'y~\cite{bonatto-stanovsky-2021} have required a rack congruence
to respect both operations, others 
such as Fenn and Rourke~\cite{fenn-rourke-1992} and Ryder~\cite{ryder-1993,ryder-1995} have omitted the condition on the inverse rack operation, and defined a rack congruence as an equivalence relation respecting the primary rack operation $\rt$ only. Our goal in this paper is to clarify this situation. We show that, while respecting the primary rack operation only is sufficient for finite racks, it is not in general sufficient for infinite racks. We do this by constructing explicit examples of ``half congruences'': equivalence relations that respect exactly one of the two rack operations.

\subsection{Discussion}

A similar situation to the one seen here arises in the context of quasigroups. These are commonly defined as a set $Q$ with a single binary operation $*$ (multiplication) such that for all $a,b\in Q$, the equations $a*x=b$ and $y*a=b$ have unique solutions $x$ and $y$ respectively. 
Equivalently, a quasigroup is a set $Q$ with a binary operation $*$ such that all left translation maps $x\mapsto a*x$ and right translation maps $x\mapsto x*a$ are bijections.
With this definition, Bates and Kiokemeister~\cite{bates-kiokemeister-1948} constructed a homomorphism from a quasigroup $(Q,*)$ such that the image is not a quasigroup. This is equivalent to the existence of a congruence on $Q$ (that is, an equivalence relation on $Q$ that respects $*$) such that the quotient is not a quasigroup. 

The underlying issue is that the definition above does not define quasigroups equationally, that is, through a set of identities. Evans~\cite{evans-1949} showed how to define quasigroups equationally, as a set with three binary operations, multiplication, left division, and right division, that satisfy a series of identities. Evans showed moreover that the image of a homomorphism respecting all three operations is always a quasigroup, but the image of a homomorphism respecting only two of them is not necessarily a quasigroup.

In universal algebra a \emph{variety} is a class of algebras that is closed under homomorphic images, subalgebras, and direct products, and an \emph{equational class} is a class of algebras that is defined through a set of identities. Every equational class is a variety, and by a theorem of Birkhoff~\cite{birkhoff-1935} the converse is also true. For an algebra $A$ belonging to an equational class $\mathcal{K}$, any quotient $A/{\sim}$ formed using a equivalence relation $\sim$ respecting all of the operations in $A$ will be a homomorphic image of $A$, and hence also a member of the class $\mathcal{K}$.
See for example Bergman~\cite{bergman-universal-algebra-2012} for an introduction to the fundamentals of universal algebra.

In the case of racks and quandles, the two-operation approach to defining them is an equational definition, but the one-operation approach is not, because the right invertibility axiom does not take the form of an identity. Thus we know \emph{a priori} that the quotient of a rack or quandle by an equivalence relation respecting both operations will again be a rack or quandle. However, we have no prior reason to expect the same to be true of a quotient by an equivalence relation that is required to respect the primary operation only. Thus, it may not be sufficient to define rack and quandle congruences as equivalence relations that respect the primary operation, without requiring that they respect the inverse operation also. The primary goal of this paper is to show by example that in general it is necessary to require a rack or quandle congruence to respect both operations.

The quasigroup counterexamples presented by Bates and Kiokemeister~\cite{bates-kiokemeister-1948} and Evans~\cite{evans-1949} are nonconstructive. They define a partial multiplication structure on a finite set, and obtain the desired algebraic structure through a chain of extensions. The resulting examples are necessarily infinite, for the same underlying reason that applies in our work here: no finite structure can be a counterexample, because the left and right translation maps in the image of the quasigroup are always onto, and a map from a finite set to itself that is onto is necessarily one-to-one. In contrast to their work, the rack and quandle examples we present are entirely constructive: we will specify the underlying sets and binary operations completely explicitly.

\subsection{Organisation}

The paper is organised as follows. In Section~\ref{sec:definitions} we set out the basic definitions of racks and quandles; in Section~\ref{sec:congruences} we develop the theory of rack congruences and quotient racks and quandles; and in Section~\ref{sec:homomorphisms} we examine rack homomorphisms and establish the connection with congruences and quotients, proving a First Isomorphism Theorem for racks and quandles, analogous to the corresponding theorems in group and ring theory. The definitions and results in these sections are for the most part not new, and many of the results follow readily from standard results in universal algebra. Nevertheless, we feel there is a benefit to including them. Firstly, doing so makes the paper more self-contained. But secondly and more importantly, racks and quandles are a relatively young area of study, and at present the literature on them is largely confined to papers rather than textbooks\footnote{At present we are only aware of one textbook on racks and quandles, Elhamdadi and Nelson~\cite{elhamdadi-nelson-2015}, and it does not address rack and quandle congruences.}. Moreover it appears that simple results have often been stated without proof --- perhaps because we ``know'' what should be true from our experience with groups and rings --- and this appears to have led at times to incorrect statements being made and then repeated. That has occurred with the definition of congruences, and so we think there is a benefit to the basic theory of rack congruences, quotients and homomorphisms being set down all in one place, with proof, in an elementary and self-contained form.

We then turn to proving our original results in Sections~\ref{sec:existence} and~\ref{sec:weightedaverage}. In Section~\ref{sec:existence} we give our first examples of half congruences in racks and quandles, including an example in a finitely presented quandle. In Section~\ref{sec:weightedaverage} we then characterise congruences in ``weighted average quandles on $\rational$'': quandles with underlying set $\rational$, where the quandle operation is a weighted average with some fixed weight $\tau$. This turns out to be an interesting family of examples, exhibiting all four possible values of the truth table for the two statements ``$\rt$-congruence implies $\lt$-congruence'' and ``$\lt$-congruence implies $\rt$-congruence''.

The weighted average quandles in Section~\ref{sec:weightedaverage} are special cases of a more general construction known as \emph{Alexander quandles}. We conclude the paper in Section~\ref{sec:generalAlexanderquandles} with a brief discussion of the extent to which our results on weighted average quandles on $\rational$ do and do not extend to Alexander quandles in general. The extension yields a complete characterisation of quandle congruences in connected Alexander quandles.

\section{Magmas, racks and quandles}
\label{sec:definitions}

In this section we define racks and quandles and give some examples that we will use later in the paper.  In both this section and the next we will work at the level of magmas, so that we can consider each of the rack and quandle properties separately. A magma is simply a set with a binary operation, that need not satisfy any further condition:

\begin{definition}
A \textbf{magma} is a pair $(M,*)$ such that $M$ is a set and $*$ is a binary operation on $M$, that is, a function $*:M\times M\to M$.
\end{definition}

\begin{definition}
Let $(M,*)$ be a magma. Then $*$ is 
\begin{description}
\item[idempotent] if for all $x\in M$,
\[
x*x = x.
\]

\item[right invertible] if for all $y,z\in M$, there exists a unique $x\in M$ such that
\[
x*y = z.
\]

\item[right self-distributive] if for all $x,y,z\in M$,
\[
(x*y)*z = (x*z)*(y*z).
\]
\end{description}
\end{definition}

\begin{definition}
A \textbf{rack} is a magma $(R,\rt)$ such that $\rt$ is right invertible and right self-distributive.

A \textbf{quandle} is a magma $(Q,\rt)$ such that $\rt$ is idempotent, right invertible and right self-distributive.
Note that every quandle is a rack.
\end{definition}
In a rack we regard the product $x\rt y$ as the result of $y$ acting on $x$.

\subsection{Inverse operation formulation of right invertibility}

Right invertibility is equivalent to the existence of a second binary operation that acts as a right inverse to the first:
\begin{lemma}
\label{lem:invop}
Let $(M,*)$ be a magma. Then
$*$ is right invertible if and only if there exists a binary operation $\bstar$ on $M$ such that
\begin{equation}
\label{eq:invop}
(x\bstar y)* y = x \qquad \text{and} \qquad (x*y)\bstar y = x,
\end{equation}
for all $x,y\in M$. 
\end{lemma}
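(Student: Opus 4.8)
The plan is to prove the two directions of the biconditional separately, with the forward direction being the substantive one.

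\smallskip

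\textbf{The ``if'' direction.} Suppose such a $\bstar$ exists. Given $y,z\in M$, I need to produce a unique $x$ with $x*y=z$. For existence, set $x=z\bstar y$; then the first identity in~\eqref{eq:invop} gives $x*y=(z\bstar y)*y=z$. For uniqueness, suppose $x*y=z$ and $x'*y=z$. Apply $\bstar y$ to both: the second identity in~\eqref{eq:invop} gives $x=(x*y)\bstar y=z\bstar y=(x'*y)\bstar y=x'$. This direction is routine.

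\smallskip

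\textbf{The ``only if'' direction.} Suppose $*$ is right invertible. For each pair $(z,y)\in M\times M$, right invertibility gives a \emph{unique} $x$ with $x*y=z$; define $z\bstar y$ to be this $x$. This defines a binary operation $\bstar:M\times M\to M$ (well-definedness is exactly the uniqueness clause). By construction, $(z\bstar y)*y=z$ for all $z,y$, which is the first identity in~\eqref{eq:invop}. For the second identity, fix $x,y\in M$ and let $z=x*y$; then $x$ is a solution of $(-)*y=z$, and $z\bstar y$ is the unique solution, so $x=z\bstar y=(x*y)\bstar y$.

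\smallskip

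I expect no real obstacle here; the only thing to be careful about is invoking the uniqueness part of right invertibility in two places — once to see that $\bstar$ is a well-defined function, and once (in the ``if'' direction) to get uniqueness of solutions back out. It may also be worth remarking that the operation $\bstar$ is uniquely determined by~\eqref{eq:invop}, since any $\bstar$ satisfying the first identity must send $(z,y)$ to a solution of $x*y=z$, and there is only one such solution; this justifies later speaking of \emph{the} inverse operation and writing $\lt$ for $\bstar$ in the rack setting.
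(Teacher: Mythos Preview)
Your proof is correct and follows essentially the same approach as the paper. The only cosmetic difference is that the paper packages the forward direction by introducing the right-translation map $S_y(x)=x*y$, observing it is a bijection, and setting $x\bstar y=(x)S_y^{-1}$; your direct definition of $z\bstar y$ as the unique solution to $x*y=z$ is exactly the same construction unpacked.
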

Note that the binary operation $\bstar$ is also right invertible, with $\invop{\bstar}=*$. Moreover, if $*$ is idempotent then so is $\bstar$.

\begin{proof}
Suppose first that $*$ is right invertible. 
For $y\in M$ define the right translation $\rght{y}:M\to M$ by
\[
x\rght{y} = x*y,
\]
where we have chosen to write $\rght{y}$ on the right of its argument. In the context of quandles this map is sometimes known as the \emph{symmetry at $y$}, and denoted $S_y$. Then since $*$ is right invertible it follows that $\rght{y}$ is a bijection for each $y\in M$, so we may define a second binary operation $\bstar$ on $M$ by
\[
x\bstar y = x\rght{y}^{-1}.
\]
Here as usual $\rght{y}^{-1}:M\to M$ is the inverse function to $\rght{y}$, such that both $\rght{y}\circ \rght{y}^{-1}$ and $\rght{y}^{-1}\circ \rght{y}$ are the identity function on $M$. 
Then
\begin{align*}
(x\bstar y)* y &= (x\rght{y}^{-1})\rght{y} = x, \\
(x* y)\bstar y &= (x\rght{y})\rght{y}^{-1} = x, 
\end{align*}
as required. 

Now suppose that there exists a binary operation $\bstar$ satisfying~\eqref{eq:invop}. Given $y,z\in M$, let $x=z\bstar y$. Then
\[
x*y = (z\bstar y)*y = z,
\]
so the equation $x*y=z$ has a solution $x\in M$. Moreover, if $w\in M$ also satisfies $w*y=z$ then
\[
w = (w*y)\bstar y = z\bstar y = (x*y)\bstar y = x,
\]
so the solution to this equation is unique. It follows that $*$ is right invertible, as required.
\end{proof}

We show by example that both conditions of~\eqref{eq:invop} must be satisfied in order for $*$ to be right invertible. 

\begin{example}
Take $M=\integer$, and define binary operations $*_1$, $*_2$ on \integer\ by 
\begin{align*}
x *_1 y &= 2x,  & x *_2 y &= \lfloor x/2\rfloor.  
\end{align*}
Then neither operation is right invertible: $*_1$ is not right invertible, because if $z$ is odd then the equation $x*_1y=z$ has no solution $x$; and $*_2$ is not right invertible, because for all $y,z$ the equation $x*_2y=z$ has two solutions $x=2z$ and $x=2z+1$. However 
\[
(x *_1 y) *_2 y = x,
\]
while 
\[
(x *_2 y) *_1 y = 2\lfloor x/2\rfloor 
   = \begin{cases} 
   x & \text{$x$ even}, \\
   x-1 & \text{$x$ odd}.
  \end{cases}
\]
Taking $*=*_1$, $\bstar=*_2$ shows that $(x* y)\bstar y = x$ is insufficient for $*$ to be right invertible; and taking $*=*_2$, $\bstar=*_1$ shows that $(x\bstar y)* y = x$ is insufficient for $*$ to be right invertible.
\end{example}

\begin{definition}
Let $(R,\rt)$ be a rack, with inverse operation $\invop\rt$. Then as we show below $\invop\rt$ is also right self-distributive, and consequently  $(R,\invop{\rt})$ is also a rack, the \textbf{dual rack} of $(R,\rt)$. When $(R,\rt)$ is a quandle so is $(R,\invop{\rt})$, the \textbf{dual quandle} of $(R,\rt)$.
\end{definition}

\begin{notation}
Let $(R,\rt)$ be a rack. 
There are various notations and conventions in use for the inverse operation $\invop\rt$. Other notations include $\rt^{-1}$ and $\lt$; and $\lt$ is sometimes defined (see for example Crans~\cite{crans-2004}) so that it is left invertible and left self-distributive, by defining $\lt$ so that
\[
y\lt x = x \invop\rt y.
\]
To place the two operations on an equal footing, in this paper we will use the notation $\lt$, and we define $\lt$ so that it is equal to ${\invop\rt}$, making it right invertible and right self-distributive.
\end{notation}

\begin{proof}[Proof that the inverse rack operation is right self-distributive]
To show that the inverse rack operation $\lt$ is right self-distributive we may proceed as follows. Given $x,y,z\in R$ let
\begin{align*}
v &= y\lt z, \\
u &= (x\lt z)\lt v = (x\lt z)\lt (y\lt z).
\end{align*}
Rearranging these equations we obtain $y=v\rt z$ and $x\lt z=u\rt v$, and so $x= (u\rt v)\rt z$. Then
\begin{align*}
(x\lt y)\lt z &= \Bigl(\bigl((u\rt v)\rt z\bigr)\lt(v\rt z)\Bigr)\lt z \\
              &= \Bigl(\bigl((u\rt z)\rt (v\rt z)\bigr)\lt(v\rt z)\Bigr)\lt z \\
              &= (u\rt z)\lt z \\
              &= u.
\end{align*}
Hence $(x\lt y)\lt z = (x\lt z)\lt (y\lt z)$, 
as required.
\end{proof}

It can further be shown that the two rack operations are right distributive over each other:
\begin{lemma}
\label{lem:distributivity}
Let $(R,\rt)$ be a rack, with inverse rack operation $\lt$. Then $\rt$ and $\lt$ are right distributive over each other: for all $x,y,z\in R$ we have
\begin{align}
(x\rt y)\lt z &= (x\lt z)\rt(y\lt z), \label{eq:lt-over-rt} \\
(x\lt y)\rt z &= (x\rt z)\lt(y\rt z). \label{eq:rt-over-lt}
\end{align}
\end{lemma}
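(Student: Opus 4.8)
The plan is to deduce both identities from the right self-distributivity of $\rt$ (equation~\eqref{eq:rightselfdistributivity}) together with the cancellation rules of Lemma~\ref{lem:invop}, namely $(w\lt z)\rt z=w$ and $(w\rt z)\lt z=w$; the second identity will then follow from the first by passing to the dual rack.

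For the first identity, $(x\rt y)\lt z=(x\lt z)\rt(y\lt z)$, I would set $a=x\lt z$ and $b=y\lt z$ and apply right self-distributivity of $\rt$ with third argument $z$, giving
\[
(a\rt b)\rt z=(a\rt z)\rt(b\rt z).
\]
Since $a\rt z=(x\lt z)\rt z=x$ and $b\rt z=(y\lt z)\rt z=y$, the right-hand side is $x\rt y$, so $(a\rt b)\rt z=x\rt y$. Applying $\lt z$ to both sides and using $(w\rt z)\lt z=w$ yields $a\rt b=(x\rt y)\lt z$, which is precisely $(x\lt z)\rt(y\lt z)=(x\rt y)\lt z$.

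For the second identity, $(x\lt y)\rt z=(x\rt z)\lt(y\rt z)$, I would invoke the fact that $(R,\lt)$ is itself a rack, with inverse operation $\invop\lt=\rt$. Since the first identity has just been established for an arbitrary rack and its inverse operation, it applies verbatim to $(R,\lt)$: substituting $\lt$ for the primary operation and $\rt$ for its inverse gives exactly $(x\lt y)\rt z=(x\rt z)\lt(y\rt z)$. (Alternatively, one can simply repeat the computation of the previous paragraph with the roles of $\rt$ and $\lt$ interchanged, using right self-distributivity of $\lt$.)

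I do not expect a real obstacle here. The only points requiring care are tracking which of the two cancellation rules from Lemma~\ref{lem:invop} is being used at each step, and—for the symmetry argument—noting that the first identity was proved for a general rack, so that it is legitimate to reapply it to the dual rack $(R,\lt)$.
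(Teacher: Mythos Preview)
The paper explicitly states this lemma ``without proof'' as a ``known fact'', so there is no proof in the paper to compare against. Your argument is correct: the derivation of the first identity from right self-distributivity of $\rt$ together with the cancellation rules of Lemma~\ref{lem:invop} is clean and complete, and the duality argument for the second identity is legitimate because the paper has already asserted (in the definition of the dual rack) that $\lt$ is right self-distributive, so $(R,\lt)$ is a rack to which the first identity applies.

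One small point worth noting for self-containment: your second-identity argument (in both the duality form and the ``repeat the computation'' form) relies on right self-distributivity of $\lt$, which the paper states but does not prove. If you want an argument using only right self-distributivity of $\rt$, you can apply $\rt(y\rt z)$ to both sides: the right-hand side becomes $\bigl((x\rt z)\lt(y\rt z)\bigr)\rt(y\rt z)=x\rt z$, while the left-hand side becomes $\bigl((x\lt y)\rt z\bigr)\rt(y\rt z)=\bigl((x\lt y)\rt y\bigr)\rt z=x\rt z$ by self-distributivity of $\rt$; since $S_{y\rt z}$ is a bijection, the two sides agree. This avoids any appeal to properties of $\lt$ beyond the cancellation rules.
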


\begin{proof}
To prove~\eqref{eq:lt-over-rt} we act on each side of the equation from the right by $z$ using $\rt$. For the left hand side we have
\[
\bigl((x\rt y)\lt z\bigr)\rt z = x\rt y,
\]
while for the right hand side we have
\begin{align*}
\bigl((x\lt z)\rt(y\lt z)\bigr) \rt z
    &= \bigl((x\lt z)\rt z\bigr)\rt\bigl((y\lt z)\rt z\bigr) \\
    &= x\rt y.        
\end{align*}
Since the results are equal, equation~\eqref{eq:lt-over-rt} holds by right invertibility of $\rt$. Equation~\eqref{eq:rt-over-lt} follows by a similar argument, or by applying the result just proved in the dual rack $(R,\lt)$.
\end{proof}

\subsection{Examples}

We list some racks and quandle constructions that will arise in the remainder of the paper.

\begin{definition}[Constant action racks]
Let $R$ be a set, let $f:R\to R$ be a bijection, and define the binary operation $\rt$ on $R$ by
\[
x\rt y = f(x)
\]
for all $x,y\in R$. Then it is easily checked that $(R,\rt)$ is a rack, 
with inverse rack operation $\lt$ given by
\[
x\lt y = f^{-1}(x)
\]
for all $x,y\in R$.
Such a rack is known as a \textbf{constant action rack} or \textbf{permutational rack}. 

A constant action rack is a quandle if and only if the underlying bijection $f$ is the identity. In that case the quandle operation is given by $x\rt y=x$ for all $x,y\in R$; such a quandle is called a \textbf{trivial quandle} or \textbf{projection quandle}.
\end{definition}

\begin{example}
\label{ex:leftshift}
Let $\binseq=\{0,1\}^\integer = \bigl\{a=(a_i)_{i\in\integer}:a_i\in\{0,1\}\bigr\}$ be the set of bi-infinite binary sequences, and let $\ell,r:\binseq\to\binseq$ be the left- and right-shift operators defined by
\begin{align*}
(\ell(a))_i &= a_{i+1} && \text{(left-shift)} \\
(r(a))_i &= a_{i-1} && \text{(right-shift)}.
\end{align*}
Then $\ell$ is a bijection with inverse $r$, and so defines a constant action rack structure on $\binseq$. We will write $\binseq_\ell$ for this rack.
\end{example}

\begin{definition}[Alexander quandles]
\label{defn:alexander}
Let $A$ be a module over the Laurent ring \zttinv, and define the binary operation $\rt$ on $A$ by
\[
x\rt y = tx +(1-t)y
\]
for all $x,y\in A$. Then $(A,\rt)$ is a quandle, with inverse quandle operation given by
\[
x\lt y = t^{-1}x +(1-t^{-1})y
\]
for all $x,y\in A$. Such a quandle is known as an \textbf{Alexander quandle} or \textbf{affine quandle}.

The dual quandle of an Alexander quandle is also an Alexander quandle, because the map $\zttinv\to\zttinv$ induced by $t\mapsto t^{-1}$ is a ring isomorphism.
\end{definition}

We note that a \zttinv-module structure on an abelian group $A$ corresponds to a choice of group isomorphism $\phi:A\to A$, with the action of $t$ on $x\in A$ then given by $tx=\phi(x)$.

\begin{construction}[Weighted average quandles]
\label{cons:weightedaverage} 
As a special case of Definition~\ref{defn:alexander}, let $R$ be a ring with unity and let $\tau\in R$ be a unit. Then $\phi: R\to R$ defined by 
\[
\phi(x) = \tau x
\]
is a group isomorphism of the additive group $(R,+)$, and so defines a \zttinv-module structure on $R$ and hence an Alexander quandle structure on $R$ with quandle operation
\[
x\rt y = \tau x + (1-\tau)y.
\]
The quandle operation may be viewed as a weighted average, and we will refer to $(R,\rt)$ as a \textbf{weighted average quandle on $R$}, with weight $\tau$. We will denote this quandle by \avgquandle[R]{\tau}.

Note that if $\tau=1$ (the unity of $R$) then the resulting weighted average quandle is the trivial quandle on $R$. We will say that \avgquandle[R]{\tau}\ is a \textbf{nontrivial weighted average quandle on $R$} if $\tau\neq 1$.
\end{construction}

We will make use of weighted average quandles on \rational\ in Section~\ref{sec:weightedaverage}.

One other case of Construction~\ref{cons:weightedaverage} deserves special mention:
\begin{example}[Dihedral or cyclic quandles]
Let $R$ be $\integer$ or $\integer_n$, and take $\tau=-1$. Then
\[
x\rt y = \tau x + (1-\tau)y = 2y-x
\]
defines a quandle structure on $R$. These quandles are known as \textbf{cyclic} or \textbf{dihedral} quandles.
\end{example}

Dihedral quandles are examples of \textbf{involutory quandles}: quandles in which $x\rght{y}^2=(x\rt y)\rt y=x$ for all $x$ and $y$. Involutory quandles are also known as \textbf{$2$-quandles}; and more generally, we say that a rack $R$ is an \textbf{$n$-rack} if $x\rght{y}^n=x$ for all $x,y\in R$. Observe that if $R$ is finite with $|R|=r$, then (since each symmetry $\rght{y}$ can be regarded as an element of the symmetric group $S_R$, and so has order dividing $|S_r|=r!$ by Lagrange's Theorem) $R$ is an $n$-rack for some $n\leq r!$.

\subsection{Subracks and subquandles}

As usual, a subrack of a rack $R$ is a subset $S$ of $R$ that is itself a rack, using the rack operation in $R$; and similarly for subquandles. Subracks and subquandles may be characterised as follows:

\begin{proposition}
\label{prop:subracktest}
Let $(R,\rt)$ be a rack (resp.\ quandle), and let $S$ be a nonempty subset of $R$. Then $(S,\rt)$ is a subrack (resp.\ subquandle) of $R$ if and only if it is closed with respect to both rack operations $\rt$ and $\lt$.
\end{proposition}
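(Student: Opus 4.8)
The plan is to prove both implications directly, handling the rack case and observing that the quandle case then comes for free. The reason it comes for free is that idempotence is a universally quantified identity: if $x\rt x=x$ holds for all $x\in R$ it holds for all $x$ in any subset $S$. The same remark applies to right self-distributivity~\eqref{eq:rightselfdistributivity} once we know $S$ is closed under $\rt$: the identity is inherited from $R$, provided all the relevant products stay in $S$. Hence the only rack axiom with any content here is right invertibility, and the role of closure under $\lt$ is precisely to control it.

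For the ``only if'' direction, assume $(S,\rt)$ is a subrack. Then $S$ is closed under $\rt$ by definition, and $(S,\rt)$ is a rack, so $\rt$ restricted to $S$ is right invertible. Given $a,b\in S$, apply right invertibility within $S$ with the pair $(b,a)$ in place of $(y,z)$ to obtain some $x\in S$ with $x\rt b=a$. This $x$ is also a solution of $x\rt b=a$ in $R$, so by the uniqueness clause of right invertibility in $R$ it must equal $a\lt b$; therefore $a\lt b=x\in S$. Thus $S$ is closed under both operations. (In particular the inverse operation of the subrack $(S,\rt)$ is the restriction of $\lt$, so the phrase ``closed under $\lt$'' is unambiguous.)

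For the ``if'' direction, assume $S$ is closed under both $\rt$ and $\lt$. Closure under $\rt$ makes $\rt|_S$ a genuine binary operation on $S$, and right self-distributivity on $S$ is inherited from $R$, so it remains only to verify that $\rt|_S$ is right invertible. Given $y,z\in S$, set $x=z\lt y$: then $x\in S$ by closure under $\lt$, and $x\rt y=(z\lt y)\rt y=z$ by~\eqref{eq:invop}, so a solution exists in $S$. Uniqueness of the solution in $S$ follows from uniqueness in $R$, since any two solutions in $S$ are in particular solutions in $R$. Hence $(S,\rt)$ is a rack, and therefore a subrack of $R$; and when $R$ is a quandle, idempotence of $\rt$ on $S$ is inherited, so $(S,\rt)$ is a subquandle.

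The argument is short, and the only point that needs care is that right invertibility of $\rt|_S$ cannot be verified inside $S$ alone --- a subset closed under $\rt$ need not have the property that $x\rt y=z$ is uniquely solvable --- so both the existence of solutions (via $z\lt y$) and their uniqueness (via uniqueness in $R$) have to be argued with explicit reference to the ambient rack. This is also what forces the identification of the solution found in $S$ with $a\lt b$ in the first direction. I expect no further obstacle beyond keeping this bookkeeping straight.
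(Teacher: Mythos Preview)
Your proof is correct and follows essentially the same approach as the paper's. Your ``only if'' direction is in fact more careful than the paper's: where the paper simply asserts that the restriction of $\lt$ to $S$ is a binary operation on $S$, you explicitly verify that the inverse operation intrinsic to the subrack $(S,\rt)$ must agree with the ambient $\lt$ by invoking uniqueness in $R$.
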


\begin{proof}
Suppose that $S$ is closed with respect to both rack operations $\rt$ and $\lt$. Then $\rt$ is a binary operation on $S$, and it is right self-distributive on $S$ because it is right self-distributive on $R$. Moreover, if $\rt$ is idempotent on $R$ then it is idempotent on $S$ also.

To complete the proof that $S$ is a subrack/subquandle of $R$ it thus remains to show that $\rt$ is right invertible on $S$. Given $y,z\in S$ observe that the equation $x\rt y=z$ has the unique solution $x=z\lt y$ in $R$, and $x\in S$, because $S$ is closed with respect to $\lt$. 

Conversely, suppose that $S$ is a subrack/subquandle of $R$. Then the restrictions of $\rt$ and $\lt$ to $S$ are binary operations on $S$, and so $S$ is closed with respect to both $\rt$ and $\lt$.
\end{proof}

In stating the condition for $S$ to be a subrack of $R$ some authors have omitted the requirement that $S$ be closed with respect to the inverse rack operation, and required closure with respect to the primary rack operation $\rt$ only. This is sufficient if $R$ is finite (or more generally, if $R$ is an $n$-rack), because in that setting closure with respect to $\lt$ can be deduced for example from repeated application of $\rt$. However, in general it is necessary to check closure with respect to both rack operations. 
Kamada~\cite{kamada-2010} gives several quandle examples demonstrating this; and a simple example is given by the subset $\rational^+=\{x\in\rational:x>0\}$ in the weighted average quandle $\avgquandle{1/2}$.
This is not a subquandle, because for example the equation $x\rt 2=\frac{x+2}2=1$ has no solution $x\in\rational^+$.

\section{Congruences and quotients}
\label{sec:congruences}

In this section we characterise when an equivalence relation on a rack or quandle 
induces a rack or quandle structure on the quotient, that is, on the set of equivalence classes. We will continue to work at the level of magmas, so that we can consider each of the rack and quandle properties separately. The key to the characterisation is the notion of a \emph{congruence}:

\begin{definition}
Let $(M,*)$ be a magma. An equivalence relation $\sim$ on $M$ is a $*$-\textbf{congruence} on $M$ if it satisfies the following \textbf{congruence condition}: for all $a,b,c,d\in M$,
\[
a\sim c,\; b\sim d \qquad \Rightarrow\qquad a*b\sim c*d.
\]
\end{definition}

\begin{notation}
Let $X$ be a set and let $\sim$ be an equivalence relation on $X$. Given $x\in X$ we write $[x]$ for the equivalence class of $x$,
\[
[x] = \{y\in X: x\sim y\},
\]
and $X/{\sim}$ for the set of equivalence classes of $\sim$,
\[
X/{\sim} = \{[x]:x\in X\}.
\]
\end{notation}

\begin{lemma}
\label{lem:welldefinedinducedop}
Let $(M,*)$ be a magma, and let $\sim$ be an equivalence relation on $M$. Then the induced operation $*$ on $M/{\sim}$ defined by 
\[
[x]*[y] = [x*y]
\]
is well defined if and only if $\sim$ is a $*$-congruence.
\end{lemma}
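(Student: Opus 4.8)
The plan is to prove the two directions of the biconditional separately, with the forward (congruence implies well-defined) direction being routine and the reverse direction requiring a small argument about why the stated condition cannot be weakened.

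First I would address well-definedness carefully: the formula $[x]*[y]=[x*y]$ purports to define a function $M/{\sim}\times M/{\sim}\to M/{\sim}$, but its right-hand side is written in terms of chosen representatives $x$ and $y$ of the classes $[x]$ and $[y]$. So the operation is well defined precisely when the output class $[x*y]$ is independent of these choices: that is, whenever $[a]=[c]$ and $[b]=[d]$ — equivalently $a\sim c$ and $b\sim d$ — we must have $[a*b]=[c*d]$, equivalently $a*b\sim c*d$. I would make this ``independence of representatives'' reformulation explicit, since it is the conceptual heart of the lemma and makes both implications almost immediate.

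For the ``if'' direction, suppose $\sim$ is a $*$-congruence. Then for any classes $X,Y\in M/{\sim}$ and any representatives $a,c\in X$ and $b,d\in Y$, we have $a\sim c$ and $b\sim d$, so the congruence condition gives $a*b\sim c*d$, i.e.\ $[a*b]=[c*d]$. Hence the value $[x*y]$ depends only on $X=[x]$ and $Y=[y]$, so the rule $[x]*[y]=[x*y]$ does define a genuine binary operation on $M/{\sim}$.

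For the ``only if'' direction, suppose the induced operation is well defined. Let $a,b,c,d\in M$ with $a\sim c$ and $b\sim d$; then $[a]=[c]$ and $[b]=[d]$ as elements of $M/{\sim}$. Applying the induced operation to the pair $([a],[b])$ and computing it with the two representative choices $(a,b)$ and $(c,d)$ — which is legitimate precisely because the operation is well defined — gives $[a*b]=[a]*[b]=[c]*[d]=[c*d]$, so $a*b\sim c*d$. Thus $\sim$ satisfies the congruence condition. The only subtlety worth flagging is purely expository: well-definedness is itself the assertion that the two computations agree for every admissible representative choice, so one must be slightly careful not to present this direction as circular — the point is that ``well defined'' unpacks to exactly a universally quantified statement over representatives, and specialising that statement to the particular representatives $a,b,c,d$ yields the congruence condition. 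I do not anticipate any real obstacle; the work is entirely in stating the representative-independence reformulation cleanly.
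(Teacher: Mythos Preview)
Your proposal is correct and follows essentially the same approach as the paper's proof: both directions amount to translating between the statements $[a]=[c]$, $[b]=[d]$ and $a\sim c$, $b\sim d$, and between $[a*b]=[c*d]$ and $a*b\sim c*d$. The only cosmetic difference is that the paper proves the ``only if'' direction by contrapositive (if $\sim$ is not a congruence, then the operation is not well defined), whereas you prove it directly; this is not a substantive distinction.
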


In view of Lemma~\ref{lem:welldefinedinducedop}, if $\sim$ is a $*$-congruence on $M$ then $(M/{\sim},*)$ is also a magma, the \textbf{quotient magma of $M$ by $\sim$}. 

\begin{proof}
Suppose that $\sim$ is a $*$-congruence. Let $a,b,c,d\in M$ be such that $[a]=[c]$ and $[b]=[d]$. Then $a\sim c$ and $b\sim d$ so we have also $a*b\sim c*d$, and therefore $[a*b]=[c*d]$. This shows that the induced operation on equivalence classes is well defined.

Suppose that $\sim$ is not a $*$-congruence. Then there exist elements $a,b,c,d\in M$ such that $a\sim c$, $b\sim d$, but $a*b\not\sim c*d$. Expressing these statements in terms of congruence classes we have  $[a]=[c]$, $[b]=[d]$ but $[a*c]\neq[c*d]$, so the induced operation $[x]*[y]=[x*y]$ is not well defined.
\end{proof}

We show that if $*$ is idempotent or right self-distributive on $M$ and $\sim$ is a $*$-congruence, then the induced operation in the quotient $M/{\sim}$ inherits this property too.

\begin{lemma}
\label{lem:inducedopprops}
Let $(M,*)$ be a magma, and let $\sim$ be a $*$-congruence on $M$.
\begin{enumerate}
\item
If $*$ is idempotent on $M$ then the induced operation $*$ is idempotent on $M/{\sim}$. 
\item
If $*$ is right self-distributive on $M$ then the induced operation $*$ is right self-distributive on $M/{\sim}$. 
\end{enumerate}
\end{lemma}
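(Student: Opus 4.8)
The plan is to exploit the fact that, by Lemma~\ref{lem:welldefinedinducedop}, the hypothesis that $\sim$ is a $*$-congruence already guarantees the induced operation on $M/{\sim}$ is well defined, so that $[x]*[y]=[x*y]$ for all $x,y\in M$ --- in other words, the quotient map $\pi:M\to M/{\sim}$, $\pi(x)=[x]$, is a surjective magma homomorphism. Both claimed properties are universally quantified identities built from $*$, and such identities pass to homomorphic images with surjective underlying map; I would simply verify this directly in each case by substituting arbitrary equivalence classes and pushing the computation down through $\pi$.

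For part~(1), given $[x]\in M/{\sim}$ I would compute $[x]*[x]=[x*x]=[x]$, using well-definedness of the induced operation for the first equality and idempotence of $*$ on $M$ for the second. Since every element of $M/{\sim}$ has the form $[x]$ for some $x\in M$, this shows the induced operation is idempotent.

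For part~(2), given $[x],[y],[z]\in M/{\sim}$ I would expand both sides using well-definedness repeatedly:
\[
([x]*[y])*[z] = [x*y]*[z] = [(x*y)*z],
\]
and
\[
([x]*[z])*([y]*[z]) = [x*z]*[y*z] = [(x*z)*(y*z)].
\]
Right self-distributivity of $*$ on $M$ gives $(x*y)*z=(x*z)*(y*z)$, hence the two right-hand sides are equal, and therefore $([x]*[y])*[z] = ([x]*[z])*([y]*[z])$, as required.

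There is no real obstacle here: the content is entirely contained in Lemma~\ref{lem:welldefinedinducedop}, and once the induced operation is known to be well defined the argument is a mechanical transport of identities along the surjection $\pi$. The only point requiring the slightest care is to invoke well-definedness at each step where one replaces $[a]*[b]$ by $[a*b]$, rather than taking it for granted.
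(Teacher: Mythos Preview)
Your proof is correct and essentially identical to the paper's: both verify each identity by direct computation on representatives, using well-definedness of the induced operation (Lemma~\ref{lem:welldefinedinducedop}) at each step and then the corresponding identity in $M$. The only cosmetic difference is that the paper writes part~(2) as a single chain of equalities rather than computing the two sides separately.
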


\begin{proof} ~
\begin{enumerate}
\item
Suppose that $*$ is idempotent on $M$. Then for all $x\in M$ we have
\[
[x]*[x] = [x*x] = [x],
\]
so $*$ is idempotent on $M/{\sim}$.

\item
Suppose that $*$ is right self-distributive on $M$. Then for all $x,y,z\in M$ we have
\begin{align*}
([x]*[y])*[z] &= [x*y]*[z] \\
              &= [(x*y)*z] \\
              &= [(x*z)*(y*z)] \\
              &= [x*z]*[y*z] \\
              &= ([x]*[z])*([y]*[z]), 
\end{align*}
so $*$ is right self-distributive on $M/{\sim}$.
\end{enumerate}
\end{proof}

We now consider the question of when the induced operation $*$ on $M/{\sim}$ is right invertible, given that $*$ is right invertible on $M$. We first note that right invertibility on $M$ guarantees that for all $y,z\in M$, the equation 
$[x]*[y]=[z]$ has a solution $[x]\in M/{\sim}$: letting $x=z\bstar y$ we
have
\[
[x]*[y] = [z\bstar y]*[y] = [(z\bstar y)*y] = [z],
\]
as required. We show however that the solution is unique if and only if $\sim$ is a $\bstar$-congruence.

\begin{lemma}
\label{lem:inducedinvertibility}
Let $(M,*)$ be a magma such that $*$ is right invertible, and let $\sim$ be a $*$-congruence on $M$. 
Then the induced operation $*$ on $M/{\sim}$ is right invertible if and only if $\sim$ is a $\bstar$-congruence. Moreover, when $*$ is right invertible  on $M/{\sim}$, the inverse operation is the induced operation
\[
[x]\bstar[y] = [x\bstar y].
\]
\end{lemma}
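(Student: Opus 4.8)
The plan is to prove the two directions of the ``if and only if'' separately, then verify the formula for the inverse operation. Throughout we already know from the discussion preceding the lemma that every equation $[x]*[y]=[z]$ has \emph{at least} one solution, namely $[x]=[z\bstar y]$; so the content is entirely about uniqueness of solutions versus the $\bstar$-congruence property.

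For the forward direction, suppose the induced operation $*$ on $M/{\sim}$ is right invertible, and let $a\sim c$ and $b\sim d$; I want to show $a\bstar b\sim c\bstar d$. The idea is to apply $S_{[b]}$-type reasoning in the quotient: set $[x]=[a\bstar b]$ and $[x']=[c\bstar d]$. Using~\eqref{eq:invop} in $M$ and the fact that $\sim$ is a $*$-congruence, compute $[x]*[b]=[(a\bstar b)*b]=[a]$ and $[x']*[d]=[(c\bstar d)*d]=[c]$. Since $[a]=[c]$ and $[b]=[d]$, both $[x]$ and $[x']$ satisfy $[y]*[b]=[a]$; by uniqueness of the solution in $M/{\sim}$ we get $[x]=[x']$, i.e.\ $a\bstar b\sim c\bstar d$. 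Hence $\sim$ is a $\bstar$-congruence.

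For the reverse direction, suppose $\sim$ is a $\bstar$-congruence. Existence of solutions is already known, so I only need uniqueness. Suppose $[w]*[y]=[z]=[x]*[y]$ with $x=z\bstar y$. Then $w*y\sim z$ and $y\sim y$, so applying the $\bstar$-congruence condition gives $(w*y)\bstar y\sim z\bstar y$; by~\eqref{eq:invop} the left side is $[w]$ and the right side is $[x]$, so $[w]=[x]$. Thus the solution is unique and the induced operation is right invertible. Finally, for the formula: once right invertibility of the induced $*$ is established, Lemma~\ref{lem:welldefinedinducedop} applied to the magma $(M,\bstar)$ shows $[x]\bstar[y]=[x\bstar y]$ is well defined precisely because $\sim$ is a $\bstar$-congruence, and the computations $([x]\bstar[y])*[y]=[(x\bstar y)*y]=[x]$ and $([x]*[y])\bstar[y]=[x]$ show this induced $\bstar$ is the inverse operation to the induced $*$ (invoking the uniqueness half of Lemma~\ref{lem:invop} if desired).

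I expect the main subtlety to be bookkeeping rather than genuine difficulty: being careful that in the forward direction we are genuinely entitled to conclude $[x]*[b]=[a]$ from the $*$-congruence hypothesis (which we have), and that the uniqueness we invoke is uniqueness in $M/{\sim}$ (the hypothesis of this direction), not in $M$. A secondary point worth stating explicitly is that the hypothesis ``$\sim$ is a $*$-congruence'' is used in \emph{both} directions — in the forward direction to push $(a\bstar b)*b=a$ down to equivalence classes, and implicitly throughout because without it $M/{\sim}$ is not even a magma and ``right invertible'' is not meaningful.
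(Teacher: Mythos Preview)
Your proof is correct and follows essentially the same approach as the paper's: the forward direction is the direct form of the paper's contrapositive argument (both exhibit $[a\bstar b]$ and $[c\bstar d]$ as solutions of the same equation in $M/{\sim}$), and your reverse direction proves uniqueness directly from the $\bstar$-congruence where the paper instead verifies the two identities of Lemma~\ref{lem:invop} for the induced $\bstar$. These are minor reorganisations of the same idea, not a different route.
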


\begin{proof}
Suppose first that $\sim$ is a $\bstar$-congruence. Then by Lemma~\ref{lem:welldefinedinducedop} the induced operation
\[
[x]\bstar[y] = [x\bstar y]
\]
is well defined on $M/{\sim}$; and we have
\begin{align*}
([x]*[y])\bstar [y] &= [x*y]\bstar[y] = [(x*y)\bstar y] = [x], \\
([x]\bstar[y])* [y] &= [x\bstar y]*[y] = [(x\bstar y)* y] = [x].
\end{align*}
By Lemma~\ref{lem:invop} $*$ is right invertible on $M/{\sim}$.

Now suppose that $\sim$ is not a $\bstar$-congruence. Then there exist elements $a,b,c,d\in M$ such that $a\sim c$, $b\sim d$ but $a\bstar b\not\sim c\bstar d$. Let $[y]=[b]=[d]$, $[z]=[a]=[c]$, and consider the equation $X*[y]=[z]$ in $M/{\sim}$. On the one hand, we may regard this as $X*[b]=[a]$, so $X=[a\bstar b]$  is a solution; and on the other, we may regard it as $X*[d]=[c]$, so $X=[c\bstar d]$ is a solution. However $[a\bstar b]\neq [c\bstar d]$ by choice of $a,b,c,d$, so the solution to this equation is not unique. It follows that $*$ is not right invertible on $M/{\sim}$.
\end{proof}

The results above give us the following characterisation of equivalence relations on a rack for which the quotient is again a rack:

\begin{proposition}
\label{prop:quotientrack}
Let $(R,\rt)$ be a rack (resp.\ quandle), and let $\sim$ be an equivalence relation on $R$. Then $(R/{\sim},\rt)$ is a rack (resp.\ quandle) if and only if $\sim$ is both a $\rt$-congruence and a $\lt$-congruence.
\end{proposition}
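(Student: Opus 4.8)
The plan is to assemble this directly from the lemmas already established in Section~\ref{sec:congruences}, treating the rack case first and then observing that the quandle case requires only one extra ingredient.

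First I would handle the forward direction. Suppose $(R/{\sim},\rt)$ is a rack. Then the induced operation $\rt$ is a well-defined binary operation on $R/{\sim}$, so by Lemma~\ref{lem:welldefinedinducedop} the relation $\sim$ is a $\rt$-congruence. Since $\rt$ is right invertible on $R/{\sim}$, Lemma~\ref{lem:inducedinvertibility} (applied with $*=\rt$, $\bstar=\lt$) then forces $\sim$ to be a $\lt$-congruence as well. That disposes of one implication with essentially no work.

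For the converse, suppose $\sim$ is both a $\rt$-congruence and a $\lt$-congruence. By Lemma~\ref{lem:welldefinedinducedop} the induced operation $\rt$ on $R/{\sim}$ is well defined, so $(R/{\sim},\rt)$ is a magma. Since $\rt$ is right self-distributive on $R$, Lemma~\ref{lem:inducedopprops}(2) gives that the induced $\rt$ is right self-distributive on $R/{\sim}$. Since $\rt$ is right invertible on $R$ and $\sim$ is a $\lt$-congruence, Lemma~\ref{lem:inducedinvertibility} gives that the induced $\rt$ is right invertible on $R/{\sim}$. Hence $(R/{\sim},\rt)$ is a rack. For the quandle case, we additionally assume $\rt$ is idempotent on $R$, and then Lemma~\ref{lem:inducedopprops}(1) supplies idempotence of the induced operation on $R/{\sim}$, so $(R/{\sim},\rt)$ is a quandle; conversely a quandle quotient is in particular a rack quotient, so the rack argument already shows $\sim$ is both a $\rt$- and $\lt$-congruence.

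There is no real obstacle here — the proposition is a bookkeeping corollary of the preceding lemmas. The one point that needs a moment of care is making sure that in the quandle case we have genuinely used \emph{all} the hypotheses: the rack axioms are handled exactly as above, and idempotence is the only additional quandle axiom, handled by Lemma~\ref{lem:inducedopprops}(1). So the proof is really just a matter of citing Lemmas~\ref{lem:welldefinedinducedop},~\ref{lem:inducedopprops} and~\ref{lem:inducedinvertibility} in the right order and noting that the quandle statement follows by adding the single observation about idempotence on top of the rack statement.
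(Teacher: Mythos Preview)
Your proposal is correct and follows essentially the same approach as the paper: both argue that the proposition is a direct corollary of Lemmas~\ref{lem:welldefinedinducedop}, \ref{lem:inducedopprops}, and~\ref{lem:inducedinvertibility}, with the quandle case handled by the idempotence clause of Lemma~\ref{lem:inducedopprops}. The only difference is presentational: the paper invokes the biconditional forms of the lemmas in a single pass, whereas you split the argument into forward and converse directions explicitly.
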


\begin{proof}
The proposition is a corollary of the lemmas proved above. By Lemma~\ref{lem:welldefinedinducedop} $\rt$ is well defined on $R/{\sim}$ if and only if $\sim$ is a $\rt$-congruence. By Lemma~\ref{lem:inducedopprops} when the induced operation is well defined it is right self-distributive on $R/{\sim}$, and idempotent on $R/{\sim}$ when it is idempotent on $R$. By Lemma~\ref{lem:inducedinvertibility} $\rt$ is right invertible on $R/{\sim}$ if and only if $\sim$ is a $\lt$-congruence, completing the proof.
\end{proof}

In view of Proposition~\ref{prop:quotientrack} we make the following definition. 

\begin{definition}
Let $(R,\rt)$ be a rack, and let $\sim$ be an equivalence relation on $R$. Then $\sim$ is a \textbf{rack congruence} if and only if it is both a $\rt$-congruence and a $\lt$-congruence.

When $(R,\rt)$ is a quandle, we may also refer to a rack congruence on $R$ as a quandle congruence.
\end{definition}

We further make the following definition:

\begin{definition}
Let $(R,\rt)$ be a rack, and let $\sim$ be an equivalence relation on $R$. Then $\sim$ is a \textbf{half congruence} if it respects exactly one of the two rack operations: that is, if it is a $\rt$-congruence but not a $\lt$-congruence, or a $\lt$-congruence but not a $\rt$-congruence.
\end{definition}

A \textbf{shelf} is a magma $(S,\rt)$ such that $\rt$ is right self-distributive, and a \textbf{spindle} is a shelf $(S,\rt)$ such that $\rt$ is also idempotent~\cite[Sec.~3.1]{crans-2004}. By Lemma~\ref{lem:inducedopprops} the quotient of a rack by a half congruence has the structure of a shelf, and the quotient of a quandle by a half congruence has the structure of a spindle. The binary operation in the quotient depends on whether the half congruence respects the primary rack operation or the inverse rack operation.

\begin{remark}
Let $(G,*)$ be a group with identity element $1$, and let $\sim$ be a $*$-congruence on $G$. Then it is a simple exercise to show that $N=[1]$ is a normal subgroup of $G$, and $a\sim b$ if and only if $a^{-1}b\in N$, which of course holds if and only if $aN=bN$. It follows too that $a\sim b$ implies $a^{-1}\sim b^{-1}$. 

Similarly, if $(R,+,\times)$ is a ring and $\sim$ is both a $+$-congruence and a $\times$-congruence on $R$, then it is an easy exercise to show that $I=[0]$ is an ideal of $R$, and $x\sim y$ if and only if $x-y\in I$, which of course holds if and only if $x+I=y+I$.
\end{remark}

\subsection{A sufficient condition on a rack for every $\rt$-congruence to be a $\lt$-congruence}

If $R$ is a rack and $\sim$ is a $\rt$-congruence on $R$ then for all $y,z\in R$ the equation $[x]\rt[y]=[z]$ has at least one solution $[x]\in R/{\sim}$, namely $[x]=[z\lt y]$. Equivalently, for all $[y]\in R/{\sim}$ the map from $R/{\sim}$ to itself given by $[x]\mapsto [x]*[y]$ is onto. 
If $R/{\sim}$ is finite then the solution to this equation is necessarily unique, because a map from a finite set to itself is one-to-one if and only if it is onto. It follows that if $R$ is a finite rack then every $\rt$-congruence on $R$ is a $\lt$-congruence. In fact a much weaker finiteness condition is sufficient for this conclusion to hold, as we show below.

Let $(R,\rt)$ be a rack. Then for each $y\in R$ the right translation $\rght{y}:R\to R$ defined by
\[
x\rght{y} = x\rt y
\]
appearing in the proof of Lemma~\ref{lem:invop} is a bijection, known as the \textbf{symmetry at $y$} and sometimes denoted $S_y$. We may regard $\rght{y}$ as an element of the symmetric group on $R$, $S_R$, and this gives us an action of the cyclic group $\langle \rght{y}\rangle$ on $R$.

Let $x,y\in R$, and suppose that the orbit of $x$ under the action of $\langle \rght{y}\rangle$ on $R$ is finite; that is, suppose that there is a positive integer $n(x,y)$ such that $x\rght{y}^{n(x,y)}=x$. Then 
\[
(x\rght{y}^{n(x,y)-1})\rght{y} = x = (x\rght{y}^{-1})\rght{y},
\]
and since $\rght{y}$ is a bijection it follows that $x\rght{y}^{n(x,y)-1}=x\rght{y}^{-1}$. We use this to prove the following sufficient condition on $(R,\rt)$ for every 
$\rt$-congruence on $R$ to be a $\lt$-congruence:

\begin{proposition}
\label{prop:rtimpliesrack}
Let $(R,\rt)$ be a rack, and suppose that for all $y\in R$, every orbit of the action of $\langle \rght{y}\rangle$ on $R$ is finite; that is, suppose that for each $x,y\in R$, there exists a positive integer $n(x,y)$ such that $x\rght{y}^{n(x,y)}=x$. Then every $\rt$-congruence on $R$ is a $\lt$-congruence (and hence a rack congruence).
\end{proposition}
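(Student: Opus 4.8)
The plan is to show that if $\sim$ is a $\rt$-congruence then the congruence condition also holds for $\lt$. The key observation, already prepared in the run-up to the proposition, is that whenever the orbit of $x$ under $\langle S_y\rangle$ is finite with $(x)S_y^{n(x,y)}=x$, we have $x\lt y = (x)S_y^{-1} = (x)S_y^{n(x,y)-1}$; that is, $x\lt y$ can be computed as an iterated $\rt$-product, applying $S_y$ a total of $n(x,y)-1$ times. So $\lt$ is, pointwise, a finite composite of $\rt$-operations, and $\rt$-congruences are automatically respected by finite composites of $\rt$.

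First I would take $a,b,c,d\in R$ with $a\sim c$ and $b\sim d$, and aim to show $a\lt b\sim c\lt d$. The first step is to reduce to the case of a common second argument. Since $\sim$ is a $\rt$-congruence, a routine induction shows that $a\sim c$ and $b\sim d$ imply $(a)S_b^{k}\sim (c)S_d^{k}$ for every $k\geq 0$ (at each step we apply the congruence condition to the pair already known to be equivalent and to the pair $b\sim d$). Now let $m=n(a,b)$ and $n=n(c,d)$ be the orbit lengths guaranteed by the hypothesis, and set $N=\lcm(m,n)$ — here finiteness of \emph{both} relevant orbits is used. Then $(a)S_b^{N}=a$ and $(c)S_d^{N}=c$, so $(a)S_b^{N-1}=(a)S_b^{-1}=a\lt b$ and likewise $(c)S_d^{N-1}=c\lt d$. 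Applying the induction with $k=N-1$ gives $a\lt b = (a)S_b^{N-1}\sim (c)S_d^{N-1} = c\lt d$, as required. This shows $\sim$ is a $\lt$-congruence; it is a $\rt$-congruence by hypothesis, hence a rack congruence.

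The main point to get right — the only real subtlety — is the passage to a common exponent: the naive attempt to transport $x\lt y$ directly fails because the exponent $n(x,y)-1$ depends on the element, so we cannot match $a\lt b$ with $c\lt d$ using exponents $m-1$ and $n-1$ directly. Replacing both orbit lengths by their least common multiple $N$ fixes this, at the cost of needing finiteness of orbits for \emph{each} pair, which is exactly what the hypothesis provides. Everything else — the induction showing $\rt$-congruences respect iterated symmetries, and the identity $S_y^{-1}=S_y^{N-1}$ on a finite orbit — is routine.
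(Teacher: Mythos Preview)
Your proof is correct and follows essentially the same approach as the paper: show inductively that $(a)S_b^{k}\sim (c)S_d^{k}$ for all $k\geq 0$, then take $N=\lcm(n(a,b),n(c,d))$ and apply the identity $(x)S_y^{-1}=(x)S_y^{N-1}$ on each orbit with $k=N-1$. The only cosmetic difference is that the paper restricts the induction to $k\geq 1$ and then assumes without loss of generality that $n(a,b),n(c,d)\geq 2$ to ensure $N-1\geq 1$, whereas by starting your induction at $k=0$ you handle the degenerate case $N=1$ automatically.
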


\begin{proof}
Let $\sim$ be a $\rt$-congruence on $R$, and let $a,b,c,d\in R$ be such that $a\sim c$ and $b\sim d$. Then
\[
a\rght{b} = a\rt b \sim c\rt d = c\rght{d},
\]
so $a\rght{b} \sim c\rght{d}$. Inductively, this gives $a\rght{b}^k \sim c\rght{d}^k$
for all positive integers $k$. 

Let $n(a,b)$, $n(c,d)$ be such that $a\rght{b}^{n(a,b)}=a$ and $c\rght{d}^{n(c,d)}=c$, and let $N=\lcm(n(a,b),n(c,d))$. We may assume without loss of generality that $n(a,b),n(c,d)\geq2$, so then $N\geq2$ also and satisfies
\begin{align*}
a\rght{b}^N &= a, & c\rght{d}^N &= c.
\end{align*}
By our discussion above we have
\[
a\lt b = a\rght{b}^{-1} = a\rght{b}^{N-1} \sim c\rght{d}^{N-1} = c\rght{d}^{-1} = 
c\lt d,
\]
so $a\lt b\sim c\lt d$. It follows that $\sim$ is a $\lt$-congruence.
\end{proof}

The following results are immediate consequences of Proposition~\ref{prop:rtimpliesrack}.

\begin{corollary}
Let $(R,\rt)$ be an $n$-rack. Then every $\rt$-congruence on $R$ is a rack congruence.
\end{corollary}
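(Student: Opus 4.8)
The corollary follows immediately from Proposition~\ref{prop:rtimpliesrack}, so the plan is simply to verify that an $n$-rack satisfies the hypothesis of that proposition. Recall that $(R,\rt)$ is an $n$-rack means that $xS_y^n = x$ for all $x,y\in R$, where $n$ is a fixed positive integer. The plan is to observe that this is precisely the statement that, for each $x,y\in R$, there exists a positive integer $n(x,y)$ — namely the constant choice $n(x,y)=n$ — such that $(x)S_y^{n(x,y)}=x$.

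First I would state that every orbit of the action of $\langle S_y\rangle$ on $R$ is finite, with each orbit having size at most $n$: since $S_y^n$ is the identity on $R$, the cyclic group $\langle S_y\rangle$ has order dividing $n$, so every orbit is finite. Then I would apply Proposition~\ref{prop:rtimpliesrack} directly with $n(x,y)=n$ for all $x,y\in R$ to conclude that every $\rt$-congruence on $R$ is a $\lt$-congruence, and hence a rack congruence.

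There is no real obstacle here: the content is entirely contained in Proposition~\ref{prop:rtimpliesrack}, and the corollary is a matter of recognising that the $n$-rack condition is a uniform (indeed constant-in-$x$-and-$y$) special case of the finite-orbit hypothesis. The only point worth spelling out is that the proposition's hypothesis allows $n(x,y)$ to depend on $x$ and $y$, whereas the $n$-rack condition gives a single $n$ that works for all of them — so the $n$-rack condition is strictly stronger than what is needed, and the implication is immediate.

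\begin{proof}
Since $(R,\rt)$ is an $n$-rack we have $(x)S_y^n = x$ for all $x,y\in R$. Thus for each $x,y\in R$ we may take $n(x,y)=n$ in Proposition~\ref{prop:rtimpliesrack}: every orbit of the action of $\langle S_y\rangle$ on $R$ has size at most $n$, and in particular is finite. By Proposition~\ref{prop:rtimpliesrack} every $\rt$-congruence on $R$ is therefore a $\lt$-congruence, and hence a rack congruence.
\end{proof}
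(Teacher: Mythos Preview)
Your proposal is correct and takes essentially the same approach as the paper, which simply states that the corollary is an immediate consequence of Proposition~\ref{prop:rtimpliesrack} without giving a separate proof. Your observation that the $n$-rack condition supplies a constant choice $n(x,y)=n$ satisfying the finite-orbit hypothesis is exactly the intended reasoning.
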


\begin{corollary}
Let $(R,\rt)$ be a finite rack. Then every $\rt$-congruence on $R$ is a rack congruence.
\end{corollary}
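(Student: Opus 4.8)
The plan is to deduce the corollary directly from Proposition~\ref{prop:rtimpliesrack} by checking that every finite rack satisfies its hypothesis. First I would note that for each $y\in R$ the symmetry $S_y$ is a bijection of $R$, and hence may be regarded as an element of the symmetric group $S_R$; since $R$ is finite, $S_R$ is a finite group, so $S_y$ has finite order, say $m_y$. Then $(x)S_y^{m_y}=x$ for every $x\in R$, so we may take $n(x,y)=m_y$ in the statement of Proposition~\ref{prop:rtimpliesrack}. Alternatively, and even more crudely, one can take the single integer $n=|R|!$ for every $x$ and $y$, since $|R|!$ is divisible by the order of each element of $S_R$; this realises $R$ as an $n$-rack, so the conclusion also follows from the preceding corollary.

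Once the hypothesis of Proposition~\ref{prop:rtimpliesrack} is verified, the proposition immediately yields that every $\rt$-congruence on $R$ is also a $\lt$-congruence, and an equivalence relation that is both a $\rt$-congruence and a $\lt$-congruence is by definition a rack congruence. This completes the argument, so the proof is little more than a one-line appeal to the proposition.

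I do not expect any real obstacle here: the only ingredient beyond Proposition~\ref{prop:rtimpliesrack} is the elementary fact that elements of a finite symmetric group have finite order. If one preferred a proof independent of the proposition, the same idea applies directly --- from $a\sim c$ and $b\sim d$ one shows $(a)S_b^k\sim(c)S_d^k$ for all positive integers $k$ by induction from the $\rt$-congruence property, and then, taking $N$ to be a common multiple of the orders of $S_b$ and $S_d$, uses $(a)S_b^{-1}=(a)S_b^{N-1}$ and $(c)S_d^{-1}=(c)S_d^{N-1}$ to conclude $a\lt b\sim c\lt d$ --- but this merely reproduces the proof of Proposition~\ref{prop:rtimpliesrack} specialised to the finite case.
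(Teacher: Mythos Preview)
Your proposal is correct and matches the paper's approach exactly: the paper states this corollary as an immediate consequence of Proposition~\ref{prop:rtimpliesrack} without further proof, and the verification you give (that each $S_y$ has finite order in the finite symmetric group $S_R$, so the orbit hypothesis is satisfied) is precisely the content implicit in the paper's earlier remark that a finite rack with $|R|=r$ is an $n$-rack for some $n\le r!$. Your alternative self-contained argument simply unwinds the proof of the proposition in the finite case, as you note.
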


\section{Rack and quandle homomorphisms}
\label{sec:homomorphisms}

In this section we define rack homomorphisms, show that every rack homomorphism defines a rack congruence, and prove a First Isomorphism Theorem for racks and quandles, analogous to the corresponding theorems for groups and rings.

\begin{definition}
Let $(R,\rt)$ and $(S,\rtb)$ be racks. A function $\phi:R\to S$ is a \textbf{rack homomorphism} if 
\[
\phi(x\rt y) = \phi(x)\rtb\phi(y)
\]
for all $x,y\in R$.
\end{definition}

The definition of a rack homomorphism requires it to respect the primary rack operation only. We now show that a rack homomorphism necessarily respects the inverse rack operation also. Note that the proof of this result depends on the fact that the target $(S,\rtb)$ is a rack.

\begin{lemma}
\label{lem:homomorphism}
Let $(R,\rt)$ and $(S,\rtb)$ be racks, and let $\phi:R\to S$ be a rack homomorphism. Then
\[
\phi(x\lt y) = \phi(x)\ltb\phi(y)
\]
for all $x,y\in R$.
\end{lemma}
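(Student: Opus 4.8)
The plan is to show that $\phi$ respects $\lt$ by exploiting the defining property of the inverse operation, namely equation~\eqref{eq:invop}: for all $u,v\in R$ we have $(u\lt v)\rt v = u$ and $(u\rt v)\lt v = u$. The idea is that $\phi(x)\ltb\phi(y)$ is, by definition of $\ltb$ in $S$, the unique element of $S$ whose $\rtb$-product with $\phi(y)$ equals $\phi(x)$; so it suffices to verify that $\phi(x\lt y)$ has this property, and then appeal to uniqueness (which is available because $\rtb$ is right invertible on $S$).

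Concretely, I would start from the identity $(x\lt y)\rt y = x$ in $R$, apply $\phi$ to both sides, and use the homomorphism property to get $\phi(x\lt y)\rtb\phi(y) = \phi(x)$. On the other hand, by the defining property of $\ltb$ in $S$ we also have $\bigl(\phi(x)\ltb\phi(y)\bigr)\rtb\phi(y) = \phi(x)$. Thus $\phi(x\lt y)$ and $\phi(x)\ltb\phi(y)$ are both solutions $w\in S$ of the equation $w\rtb\phi(y) = \phi(x)$. Since $\rtb$ is right invertible on $S$, this solution is unique, and therefore $\phi(x\lt y) = \phi(x)\ltb\phi(y)$, as required.

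There isn't really a hard step here; the proof is short once one recognises that the inverse operation is characterised by a uniqueness property and that right invertibility of $\rtb$ on the codomain is exactly what licenses the cancellation. The only point needing a little care is to make sure we invoke right invertibility of the target operation $\rtb$ (guaranteed since $(S,\rtb)$ is a rack) rather than anything about $R$, and to cite Lemma~\ref{lem:invop} or the definition of $\ltb$ for the identity $\bigl(\phi(x)\ltb\phi(y)\bigr)\rtb\phi(y)=\phi(x)$. An alternative, essentially equivalent, route is to substitute $x\mapsto x\rt y$ into the claim and reduce it to the already-known relation $\phi((x\rt y)\lt y) = \phi(x)$ together with $\phi(x)\ltb\phi(y)$ applied to $\phi(x\rt y) = \phi(x)\rtb\phi(y)$; but the uniqueness argument above is cleaner and I would present that one.
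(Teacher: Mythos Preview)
Your proof is correct and is essentially the same as the paper's: both use the identity $(x\lt y)\rt y=x$ in $R$, the homomorphism property, and right invertibility of $\rtb$ in $S$. The only cosmetic difference is that the paper packages the final step as a direct chain of equalities via $(u\rtb v)\ltb v=u$ in $S$, whereas you phrase it as a uniqueness argument for solutions of $w\rtb\phi(y)=\phi(x)$.
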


\begin{proof}
We have
\begin{align*}
\phi(x\lt y) &= \bigl(\phi(x\lt y)\rtb\phi(y)\bigr)\ltb\phi(y) \\
             &= \phi((x\lt y)\rt y)\ltb\phi(y) \\
             &= \phi(x)\ltb\phi(y),
\end{align*}
as required.
\end{proof}

In groups and rings the notions of homomorphisms and quotients are connected by the corresponding First Isomorphism Theorems. We prove that the analogous connection holds for racks and quandles too. To prove this we begin by showing that every rack homomorphism defines a rack congruence:

\begin{proposition}
\label{prop:simphi}
Let $(R,\rt)$ and $(S,\rtb)$ be racks, and let $\phi:R\to S$ be a rack homomorphism. The relation $\sim_\phi$ defined on $R$ by
\begin{equation}
\label{eq:kernel}
x\sim_\phi y \qquad\Leftrightarrow\qquad \phi(x)=\phi(y)
\end{equation}
is a rack congruence on $R$.
\end{proposition}

For any sets $A$ and $B$ and function $\phi:A\to B$, the relation $\sim_\phi$ defined on $A$ by equation~\eqref{eq:kernel} is the \textbf{kernel} of $\phi$.

\begin{proof}
It is easily seen that $\sim_\phi$ is an equivalence relation, so we check the congruence conditions. Suppose that $a\sim_\phi c$ and $b\sim_\phi d$. Then
\[
\phi(a\rt b) = \phi(a)\rtb\phi(b) = \phi(c)\rtb\phi(d) = \phi(c\rt d),
\]
so $a\rt b \sim_\phi c\rt d$; and likewise, using Lemma~\ref{lem:homomorphism} we have
\[
\phi(a\lt b) = \phi(a)\ltb\phi(b) = \phi(c)\ltb\phi(d) = \phi(c\lt d),
\]
so $a\lt b \sim_\phi c\lt d$.
\end{proof}

We next show that a push-forward or pullback of a subrack is again a subrack:

\begin{proposition}
Let $(R,\rt)$ and $(S,\rtb)$ be racks, and let $\phi:R\to S$ be a rack homomorphism.
\begin{enumerate}
\item
If $A$ is a subrack of $R$, then $\phi(A)$ is a subrack of $S$. Moreover, if $A$ is a quandle then so is $\phi(A)$.
\item
If $B$ is a subrack of $S$, then $\phi^{-1}(B)$ is a subrack of $R$.
\end{enumerate} 
\end{proposition}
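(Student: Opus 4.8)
The plan is to verify each of the four required closure conditions directly, using the rack/subrack test of Proposition~\ref{prop:subracktest}: a nonempty subset is a subrack precisely when it is closed under both $\rt$ and $\lt$. So for part~(1) I would take $u,v\in\phi(A)$, write $u=\phi(a)$, $v=\phi(b)$ with $a,b\in A$, and observe that $u\rtb v=\phi(a)\rtb\phi(b)=\phi(a\rt b)$ lies in $\phi(A)$ because $A$ is closed under $\rt$; similarly $u\ltb v=\phi(a)\ltb\phi(b)=\phi(a\lt b)\in\phi(A)$, using the preceding lemma that $\phi$ respects $\lt$ together with closure of $A$ under $\lt$. Nonemptiness of $\phi(A)$ follows from nonemptiness of $A$. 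For the quandle claim, if $A$ is a quandle then $\rt$ is idempotent on $A$, hence idempotent on the image, and $\phi(A)$ is already known to be a subrack, so it is a subquandle.

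For part~(2), set $B'=\phi^{-1}(B)$. First I would note $B'$ is nonempty: since $B$ is a subrack it is nonempty, and picking $s\in B$... actually this is the one genuinely fiddly point, because $\phi$ need not be surjective and $\phi^{-1}(B)$ could a priori be empty. I would handle it by noting that if $\phi^{-1}(B)=\emptyset$ the statement is vacuous or by restricting attention to the case $\phi^{-1}(B)\neq\emptyset$; alternatively one can simply drop nonemptiness from the claim's hypotheses-in-use and remark that the empty set vacuously satisfies the closure conditions. Granting $B'\neq\emptyset$, take $a,b\in B'$, so $\phi(a),\phi(b)\in B$; then $\phi(a\rt b)=\phi(a)\rtb\phi(b)\in B$ since $B$ is closed under $\rtb$, whence $a\rt b\in B'$, and likewise $\phi(a\lt b)=\phi(a)\ltb\phi(b)\in B$ since $B$ is closed under $\ltb$, whence $a\lt b\in B'$. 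By Proposition~\ref{prop:subracktest}, $B'$ is a subrack of $R$.

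The main obstacle here is not any real difficulty but rather the bookkeeping around nonemptiness in part~(2) and making sure the appeal to Proposition~\ref{prop:subracktest} is clean; the computations themselves are one-liners built entirely from the homomorphism property, the lemma that homomorphisms respect $\lt$, and the subrack test. I expect the proof to be short, essentially four displayed equalities (two per part) plus the invocations of Proposition~\ref{prop:subracktest}.
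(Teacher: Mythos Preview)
Your proposal is correct and follows essentially the same approach as the paper: both proofs invoke Proposition~\ref{prop:subracktest} and verify closure of the image and preimage under each of the two rack operations via the homomorphism identity (and the lemma that $\phi$ respects $\lt$), with the quandle clause handled by pushing idempotence through $\phi$. The only difference is that you flag the nonemptiness issue for $\phi^{-1}(B)$ in part~(2), which the paper's proof passes over silently; your observation that the closure conditions are vacuously satisfied when $\phi^{-1}(B)=\emptyset$ is the right way to dispose of it.
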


\begin{proof}
By Proposition~\ref{prop:subracktest} a subset of a rack is a subrack if and only if it is closed under both rack operations.
\begin{enumerate}
\item
Let $s_1,s_2\in\phi(A)$. Then there exist $a_1,a_2\in A$ such that $\phi(a_i)=s_i$ for $i=1,2$, and
\begin{align*}
s_1\rtb s_2 &= \phi(a_1)\rtb\phi(a_2) = \phi(a_1\rt a_2)\in\phi(A), \\
s_1\ltb s_2 &= \phi(a_1)\ltb\phi(a_2) = \phi(a_1\lt a_2)\in\phi(A).
\end{align*}
This shows that $\phi(A)$ is a subrack of $S$. If moreover $A$ is a quandle, then
\[
s_1\rtb s_1 = \phi(a_1)\rtb\phi(a_1) = \phi(a_1\rt a_1) = \phi(a_1) = s_1, 
\]
so the idempotence axiom holds in $\phi(A)$.
\item
Let $r_1,r_2\in\phi^{-1}(B)$. Then $b_i=\phi(r_i)\in B$ for $i=1,2$, and
\begin{align*}
\phi(r_1\rt r_2) &= \phi(r_1)\rtb\phi(r_2) = b_1\rtb b_2\in B, \\
\phi(r_1\lt r_2) &= \phi(r_1)\ltb\phi(r_2) = b_1\ltb b_2\in B.
\end{align*}
This shows that $\phi^{-1}(B)$ is a subrack of $R$.
\end{enumerate}
\end{proof}

We now tie everything in this section together in the First Isomorphism Theorem for racks and quandles:

\begin{theorem}[First Isomorphism Theorem for Racks]
\label{thm:firstiso}
Let $(R,\rt)$ and $(S,\rtb)$ be racks, let $\phi:R\to S$ be a rack homomorphism, and let $\sim_\phi$ be the rack congruence on $R$ of Proposition~\ref{prop:simphi}. Then the map $\psi: R/{\sim_\phi}\to\phi(R)$ defined by
\[
\psi([x]) = \phi(x)
\]
is a rack isomorphism. 
\end{theorem}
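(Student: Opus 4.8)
The plan is to verify the three things that make $\psi$ a rack isomorphism: that it is well defined, that it is a rack homomorphism, and that it is a bijection onto $\phi(R)$. Throughout I would use that $\sim_\phi$ is a rack congruence (Proposition~\ref{prop:simphi}), so that by Proposition~\ref{prop:quotientrack} the quotient $(R/{\sim_\phi},\rt)$ is itself a rack, with induced operations $[x]\rt[y]=[x\rt y]$ and $[x]\lt[y]=[x\lt y]$.

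First I would check that $\psi$ is well defined. If $[x]=[y]$ then $x\sim_\phi y$, which by definition of $\sim_\phi$ means exactly $\phi(x)=\phi(y)$, so $\psi([x])=\phi(x)=\phi(y)=\psi([y])$; and $\phi(x)\in\phi(R)$, so $\psi$ does land in the claimed codomain. Next, $\psi$ is a rack homomorphism: for all $x,y\in R$,
\[
\psi([x]\rt[y]) = \psi([x\rt y]) = \phi(x\rt y) = \phi(x)\rtb\phi(y) = \psi([x])\rtb\psi([y]),
\]
using the formula for the induced operation, the definition of $\psi$, and the fact that $\phi$ is a rack homomorphism. (One could also note the analogous identity for $\lt$, but since a rack isomorphism is by definition a bijective rack homomorphism, this is not needed separately — its inverse automatically respects $\rtb$ as well.)

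Finally I would check that $\psi$ is a bijection onto $\phi(R)$. Surjectivity is immediate: any element of $\phi(R)$ has the form $\phi(x)=\psi([x])$ for some $x\in R$. For injectivity, suppose $\psi([x])=\psi([y])$; then $\phi(x)=\phi(y)$, so $x\sim_\phi y$ by the very definition of $\sim_\phi$, hence $[x]=[y]$. Thus $\psi$ is a bijective rack homomorphism, i.e. a rack isomorphism.

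There is no real obstacle here — the theorem is essentially bookkeeping once the congruence and quotient machinery of Section~\ref{sec:congruences} is in place, and the only subtlety is the standard one of confirming well-definedness before doing anything else. The one point worth a remark is that, because the relation $\sim_\phi$ is defined precisely as the kernel pair of $\phi$ (equality of images), the well-definedness and injectivity arguments are two directions of the same biconditional $[x]=[y]\iff\phi(x)=\phi(y)$, so both fall out at once. If the quandle case is to be mentioned, one observes that when $R$ (hence $\phi(R)$, and also $R/{\sim_\phi}$ by Proposition~\ref{prop:quotientrack}) is a quandle, $\psi$ is then an isomorphism of quandles, with no further work required.
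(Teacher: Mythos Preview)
Your proof is correct and follows essentially the same approach as the paper: verify well-definedness and injectivity directly from the definition of $\sim_\phi$, note surjectivity from the choice of codomain, and then check the homomorphism identity $\psi([x]\rt[y])=\psi([x])\rtb\psi([y])$ via exactly the same chain of equalities. The only difference is that you spell out each step in more detail, whereas the paper dispatches well-definedness, injectivity and surjectivity in a single sentence before doing the homomorphism calculation.
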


\begin{proof}
First note that $\psi$ is well defined and one-to-one by definition of $\sim_\phi$, and it is onto by choice of domain and codomain. It remains to check that it is a rack homomorphism. Given $[x],[y]\in R/{\sim_\phi}$ we have
\begin{align*}
\psi([x]\rt[y]) &= \psi([x\rt y]) \\
                &= \phi(x\rt y) \\
                &= \phi(x)\rtb\phi(y) \\ 
                &= \psi([x])\rtb\psi([y]),
\end{align*}
completing the proof.
\end{proof}

\section{Existence of half congruences}
\label{sec:existence}

In Section~\ref{sec:congruences} we proved that an equivalence relation $\sim$ on a rack $R$ induces the structure of a rack in the quotient $R/{\sim}$ if and only if $\sim$ is both a $\rt$-congruence and a $\lt$-congruence. 
As we have seen there are conditions under which one congruence condition necessarily implies the other.
We now show however that in general both congruence conditions are necessary, by constructing explicit examples of half congruences.

We will construct our first example in a proper rack (a rack that is not a quandle), and then we will modify the construction to obtain an example in a quandle. By restricting our attention to a subquandle of this example we will obtain an example of a half congruence in a finitely presented quandle.

\subsection{Example in a proper rack}

We will construct our first example of a half congruence in the rack $\binseq_\ell$. This is a proper rack, in the sense that it is a rack that is not a quandle.

Recall from Example~\ref{ex:leftshift} that $\binseq_\ell$ is the constant action rack with underlying set $\binseq=\{0,1\}^\integer$, and rack operations
\begin{align*}
a\rt b &= \ell(a), &
a\lt b &= r(a),
\end{align*}
where $\ell$ and $r$ are the left- and right-shift operators, respectively. 
Define the relation $\sim$ on $\binseq$ by
\[
a\sim b \qquad\Leftrightarrow\qquad\text{$a_i=b_i$ for all $i\geq0$}.
\]
It is easily seen that $\sim$ is an equivalence relation on \binseq, and moreover that 
\[
a\sim b \qquad\Rightarrow\qquad \ell(a)\sim\ell(b).
\]
It follows that $\sim$ is a $\rt$-congruence on $\binseq_\ell$.
However, $\sim$ is not a $\lt$-congruence on $\binseq_\ell$, because
\[
a\sim b \qquad\not\Rightarrow\qquad r(a)\sim r(b).
\]
For a concrete example, define $a,b\in\binseq$ by $a_i=0$ for all $i$ and $b_{-1}=1$, $b_{i}=0$ for all $i\neq -1$. Then $a\sim b$ but $r(a)\not\sim r(b)$, because $(r(a))_0=0$ while $(r(b))_0=1$.

\subsection{Example in a quandle}

To construct an example of a half congruence in a quandle we modify the rack operation in $\binseq_\ell$ to obtain a quandle structure on \binseq. We first define a second equivalence relation $\simeq$ on \binseq\ by 
\[
a\simeq b \qquad\Leftrightarrow\qquad \text{there exist $s,t\in\integer$ such that $\ell^s(a)\sim\ell^t(b)$}.
\]
Equivalently,
\[
a\simeq b \qquad\Leftrightarrow\qquad \text{there exist $j,k\in\integer$ s.t.\ $a_i=b_{i+j}$ for all $i\geq k$}.
\]
The equivalence relation $\simeq$ is a rack congruence on $\binseq_\ell$, although we will not make use of this fact. Observe that $\sim$ is a refinement of $\simeq$, in the sense that $a\sim b$ implies $a\simeq b$; and that $c\simeq \ell(c)\simeq r(c)$ for all $c\in\binseq$, so $a\simeq b\Leftrightarrow a\simeq\ell(b)\Leftrightarrow a\simeq r(b)$.

Now define binary operations $\rtb,\ltb$ on \binseq\ by
\begin{align*}
a \rtb b &= \begin{cases}
             a & a\simeq b, \\
             \ell(a) & a\not\simeq b, 
             \end{cases} &
a \ltb b &= \begin{cases}
             a & a\simeq b, \\
             r(a) & a\not\simeq b.
             \end{cases} 
\end{align*} 
We verify the quandle axioms for $(\binseq,\rtb)$ to show that it is a quandle with inverse quandle operation $\ltb$.
\begin{description}
\item[Idempotence] 
For all $a\in\binseq$ we have $a\rtb a=a$, since $a\simeq a$ for all $a$.
\item[Right invertibility]
We will  use Lemma~\ref{lem:invop} to check right invertibility, by checking that $\ltb$ is the inverse operation to $\rtb$. For all $a,b\in\binseq$ we have
\begin{align*}
(a\rtb b)\ltb b &= \begin{cases}
                 a\ltb b & a\simeq b, \\
                 \ell(a)\ltb b & a\not\simeq b, 
                 \end{cases} \\
               &= \begin{cases}
                  a & a\simeq b, \\
                  r(\ell(a)) & a\not\simeq b
                  \end{cases} \\
               &= a.                 
\end{align*}
The calculation $(a\ltb b)\rtb b=a$ is similar.
\item[Right self-distributivity]
For all $a,b,c\in\binseq$ we have
\begin{align*}
(a\rtb b)\rtb c &= \begin{cases}
                 a\rtb c & a\simeq b, \\
                 \ell(a)\rtb c & a\not\simeq b, 
                 \end{cases} \\
               &= \begin{cases}
                  a         & a\simeq b, a\simeq c, \\
                  \ell(a)   & a\simeq b, a\not\simeq c, \\
                  \ell(a)   & a\not\simeq b, a\simeq c, \\
                  \ell^2(a)   & a\not\simeq b, a\not\simeq c,                  
                  \end{cases}
\end{align*}
while
\begin{align*}
(a\rtb c)\rtb (b\rtb c)
                  & = \begin{cases}
                  a \rtb b            & a\simeq c, b\simeq c, \\
                  a \rtb \ell(b)      & a\simeq c, b\not\simeq c, \\
                  \ell(a)\rtb b       & a\not\simeq c, b\simeq c, \\
                  \ell(a)\rtb\ell(b)  & a\not\simeq c, b\not\simeq c.
                  \end{cases}
\end{align*}
To proceed, observe that the case $a\not\simeq c, b\not\simeq c$ is the only one where 
the relationship between $a$ and $b$ with respect to $\simeq$ is not determined by their relationships with $c$. We find
\begin{align*}
(a\rtb c)\rtb (b\rtb c)
                  & = \begin{cases}
                  a       & a\simeq c, b\simeq c, \\
                  \ell(a) & a\simeq c, b\not\simeq c,\\ 
                  \ell^2(a)   & a\not\simeq c, b\simeq c, \\
                  \ell(a)     & a\not\simeq c, b\not\simeq c, a\simeq b,           \\     
                  \ell^2(a)   & a\not\simeq c, b\not\simeq c, a\not\simeq b.               
                  \end{cases}
\end{align*}
The third and fifth cases together cover the case $a\not\simeq b, a\not\simeq c$, and their outcomes agree, so we may combine them into a single case. The remaining three cases of the case division can each be re-expressed in terms of the relationships with respect to $\simeq$ between $a$ and each of $b$ and $c$; doing this and reordering the cases we obtain finally
\[
(a\rtb c)\rtb (b\rtb c)
               = \begin{cases}
                  a         & a\simeq b, a\simeq c, \\
                  \ell(a)   & a\simeq b, a\not\simeq c, \\
                  \ell(a)   & a\not\simeq b, a\simeq c, \\
                  \ell^2(a)   & a\not\simeq b, a\not\simeq c.                
                  \end{cases}
\]
This agrees with the result for $(a\rtb b)\rtb c$ found above, completing the proof that $\rtb$ is right self-distributive.
\end{description}

We now show that $\sim$ is a half congruence on $(\binseq,\rtb)$, by showing that it is a $\rtb$-congruence but not a $\ltb$-congruence. To show that $\sim$ is a $\rtb$-congruence we will make use of the following lemma:

\begin{lemma}
\label{lem:simandsimeq}
Let $x,y,z\in\binseq$ be such that $x\sim y$. Then $x\simeq z$ if and only if $y\simeq z$.
\end{lemma}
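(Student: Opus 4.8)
The plan is to reduce the statement to two facts that the excerpt has already recorded: that $\simeq$ is an equivalence relation on $\binseq$, and that $\sim$ is a refinement of $\simeq$, i.e.\ $a\sim b$ implies $a\simeq b$. Granting these, the lemma is immediate. Since $x\sim y$ we have $x\simeq y$. If $x\simeq z$, then by symmetry $y\simeq x$, and combining with $x\simeq z$ by transitivity gives $y\simeq z$; conversely, if $y\simeq z$, then from $x\simeq y$ and transitivity we get $x\simeq z$. Hence $x\simeq z$ if and only if $y\simeq z$.

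The only thing requiring a little care is confirming the two ingredients, both of which are remarks the excerpt has already made. That $\sim$ refines $\simeq$ is clear from the definitions: if $a_i=b_i$ for all $i\geq 0$, then $j=0$, $k=0$ witnesses $a\simeq b$. That $\simeq$ is an equivalence relation is routine: reflexivity is witnessed by $j=k=0$; for symmetry, if $a_i=b_{i+j}$ for all $i\geq k$ then reindexing gives $b_i=a_{i-j}$ for all $i\geq k+j$; and for transitivity, if $a_i=b_{i+j}$ for $i\geq k$ and $b_i=c_{i+j'}$ for $i\geq k'$, then $a_i=c_{i+j+j'}$ for all $i\geq\max(k,k'-j)$. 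In the write-up I would simply cite these rather than re-derive them, since the text already calls $\simeq$ an equivalence relation and explicitly notes that $\sim$ is a refinement of $\simeq$.

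There is no real obstacle here; the lemma is a formal consequence of $\simeq$ being an equivalence relation and $\sim$ lying inside it. If anything, the ``hardest'' point is purely bookkeeping — keeping the index thresholds straight when checking transitivity of $\simeq$ — but that has already been absorbed into the assertion that $\simeq$ is an equivalence relation, so in practice the proof is two lines.
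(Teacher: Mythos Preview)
Your proof is correct. You argue at the level of equivalence relations: since $\sim$ refines $\simeq$ and $\simeq$ is itself an equivalence relation, $x\sim y$ gives $x\simeq y$, and then $x\simeq z\Leftrightarrow y\simeq z$ is immediate from symmetry and transitivity of $\simeq$. Both ingredients are indeed already asserted in the text before the lemma, so citing them is legitimate.

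The paper takes a different, lower-level route: rather than invoking the equivalence-relation status of $\simeq$, it works directly with the index witnesses. Given $x_i=y_i$ for $i\geq 0$ and $\ell^s(x)\sim\ell^t(z)$, it explicitly produces integers $r+s$ and $r+t$ (with $r=\max\{0,-s\}$) such that $\ell^{r+s}(y)\sim\ell^{r+t}(z)$. In effect the paper is re-proving, by hand, the instance of transitivity of $\simeq$ that your argument simply cites. Your approach is cleaner and more conceptual; the paper's approach has the minor advantage of being self-contained (it does not rely on the unproved assertion that $\simeq$ is transitive), but at the cost of the index bookkeeping you mention.
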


\begin{proof}
By symmetry it suffices to show that $x\simeq z$ implies $y\simeq z$.
Let $s,t\in\integer$ be such that $\ell^s(x)\sim\ell^t(z)$. Then we have $x_{i+s}=z_{i+t}$ for $i\geq 0$, and $x_i=y_i$ for $i\geq0$. Let $r=\max\{0,-s\}$. Then $r\geq0$, so if $i\geq0$ then $i+r\geq 0$, which implies $x_{i+r+s}=z_{i+r+t}$; and $r+s\geq0$, so if $i\geq0$ then $i+r+s\geq0$, which implies $x_{i+r+s}=y_{i+r+s}$. Hence $y_{i+r+s}=z_{i+r+t}$ for $i\geq0$, which says $\ell^{r+s}(y)\sim\ell^{r+t}(z)$. Therefore $y\simeq z$.
\end{proof}

\begin{proposition}
The equivalence relation $\sim$ is a $\rtb$-congruence on \binseq\ that is not a $\ltb$-congruence.
\end{proposition}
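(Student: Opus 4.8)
The plan is to verify the $\rtb$-congruence condition by a case analysis, using Lemma~\ref{lem:simandsimeq} to control which branch of the definition of $\rtb$ applies on each side, and then to exhibit an explicit counterexample to the $\ltb$-congruence condition, mirroring the half congruence already constructed in $\binseq_\ell$.

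For the $\rtb$-congruence condition, suppose $a\sim c$ and $b\sim d$. The first step is to show that $a\simeq b$ if and only if $c\simeq d$. Applying Lemma~\ref{lem:simandsimeq} to $a\sim c$ (with $z=b$) gives that $a\simeq b$ if and only if $c\simeq b$, and applying it to $b\sim d$ (with $z=c$) gives that $b\simeq c$ if and only if $d\simeq c$; chaining these two equivalences yields $a\simeq b$ if and only if $c\simeq d$. Now split into two cases. If $a\simeq b$, then also $c\simeq d$, so $a\rtb b=a$ and $c\rtb d=c$, and $a\sim c$ gives $a\rtb b\sim c\rtb d$. If $a\not\simeq b$, then also $c\not\simeq d$, so $a\rtb b=\ell(a)$ and $c\rtb d=\ell(c)$; and since $a\sim c$ implies $\ell(a)\sim\ell(c)$ (as already observed for $\binseq_\ell$ --- the nonnegative-index coordinates of $\ell(a)$ are the index-$\geq1$ coordinates of $a$, on which $a$ and $c$ agree), we again obtain $a\rtb b\sim c\rtb d$. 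Hence $\sim$ is a $\rtb$-congruence.

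For the failure of the $\ltb$-congruence condition I would reuse the elements from the $\binseq_\ell$ example, together with a witness that forces the shift branch of $\ltb$. Let $a$ be the all-zeros sequence, let $c\in\binseq$ be defined by $c_{-1}=1$ and $c_i=0$ for $i\neq-1$, and let $b=d$ be the all-ones sequence. Then $a\sim c$ and $b\sim d$ trivially. Since $a$ and $c$ are both eventually zero as the index tends to $+\infty$ while $b$ is not, neither $a$ nor $c$ is $\simeq$-equivalent to $b$, so $a\ltb b=r(a)$ and $c\ltb d=r(c)$. But $(r(a))_0=a_{-1}=0$ while $(r(c))_0=c_{-1}=1$, so $r(a)\not\sim r(c)$, i.e.\ $a\ltb b\not\sim c\ltb d$. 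Thus $\sim$ is not a $\ltb$-congruence.

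The argument is essentially routine; the only step requiring a little care is the opening move of the $\rtb$-congruence verification, namely using Lemma~\ref{lem:simandsimeq} twice to transfer the condition $a\simeq b$ across both $a\sim c$ and $b\sim d$, so that the two sides of the putative congruence fall into the same branch of the definition of $\rtb$. Once that is established the remaining computations are immediate, as is the counterexample for $\ltb$.
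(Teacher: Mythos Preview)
Your proof is correct and follows essentially the same approach as the paper: the $\rtb$-congruence part uses Lemma~\ref{lem:simandsimeq} twice in the same way to align the branches of the case split, and your counterexample for $\ltb$ differs only cosmetically (you use the all-zeros sequence and the sequence with a single $1$ at index $-1$, whereas the paper uses $\alpha$ with a $1$ at index $0$ and $\beta$ with $1$s at all nonpositive indices, both paired against the all-ones sequence).
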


\begin{proof}
We first show that $\sim$ is a $\rtb$-congruence on \binseq. Suppose that $a,b,c,d\in\binseq$ with $a\sim c$ and $b\sim d$. Applying Lemma~\ref{lem:simandsimeq} twice, first with $(x,y,z)=(a,c,b)$ and then with $(x,y,z)=(b,d,c)$, we conclude that $a\simeq b$ if and only if $c\simeq d$. If $a\simeq b$ then
\[
a\rtb b = a \sim c = c\rtb d;
\]
and if $a\not\simeq b$ then
\[
a\rtb b = \ell(a) \sim \ell(c) = c\rtb d.
\]
In either case $a\rtb b \sim c\rtb d$, so $\sim$ is a $\rtb$-congruence.

We now show that $\sim$ is not a $\ltb$-congruence on \binseq. Define $\alpha,\beta,\gamma\in\binseq$ by
\begin{align*}
\alpha_i &= \begin{cases} 
            1 & i=0, \\
            0 & i\neq 0,
            \end{cases} &
\beta_i  &= \begin{cases} 
            1 & i\leq 0, \\
            0 & i> 0,
            \end{cases} &
\gamma_i &= \text{$1$ for all $i$}.
\end{align*}
Then $\alpha\sim\beta$, but $\alpha\ltb\gamma=r(\alpha)\not\sim r(\beta)=\beta\ltb\gamma$.
\end{proof}

\subsection{A finitely presented example}

By restricting attention to the subquandle of $(\binseq,\rtb)$ generated by $\alpha,\beta,\gamma$ we obtain a finitely presented example. Let
\[
\binseq_0=\{\ell^j(\alpha),\ell^j(\beta),\gamma:j\in\integer\}.
\]
Then $(\binseq_0,\rtb)$ is a subquandle of $(\binseq,\rtb)$, because $\binseq_0$ is a union of orbits of $(\binseq,\rtb)$ and hence closed with respect to both $\rtb$ and $\ltb$. We claim that $(\binseq_0,\rtb)$ has presentation
\begin{equation}
\label{eq:B0presentation}
\langle \alpha,\beta,\gamma : \alpha\rtb\beta = \alpha, \beta\rtb\alpha = \beta, \gamma\rtb\alpha = \gamma, \gamma\rtb\beta = \gamma\rangle. 
\end{equation}

The relations of~\eqref{eq:B0presentation} certainly hold in $(\binseq_0,\rtb)$. 
For the purpose of proving this claim let $(Q,\rt)$ be a quandle with 
elements $a,b,c$ satisfying the relations of~\eqref{eq:B0presentation} under the substitutions $\alpha\mapsto a$, $\beta\mapsto b$, $\gamma\mapsto c$; that is, suppose that $a,b,c$ satisfy the equations
\begin{align*}
a\rt b &= a, & b\rt a &= b, & c\rt a &= c, & c\rt b &= c.
\end{align*}
We will show that $(\binseq_0,\rtb)$ satisfies the required universal property of the quandle defined by~\eqref{eq:B0presentation}, by showing that
the set map $f:\{\alpha,\beta,\gamma\}\to\{a,b,c\}$ given by the substitutions above uniquely extends to a quandle homomorphism $(\binseq_0,\rtb)\to(Q,\rt)$.  

Let $S$ be the subquandle of $Q$ generated by $a,b,c$, and for
$x\in\{a,b\}$ and $j\in\integer$ let $x^j=x\rght{c}^j$; inductively,  $x^j$ is given by
\[
x^j = \begin{cases}
      x & j=0, \\
      x^{j-1}\rt c & j>0, \\
      x^{j+1}\lt c & j<0.
      \end{cases}
\]
We claim that $S=X$, where $X=\{a^j,b^j,c:j\in\integer\}$. Since the set $X$ contains the generators $a,b,c$ of $S$, and any subquandle containing $a,b,c$ must contain $X$, to prove this claim it is sufficient to show that $X$ is a subquandle, by showing that it is closed under the quandle operations. In doing so we will also completely determine the multiplication table in $(S,\rt)$.
Note that we explicitly make no assumption that different expressions of the form $a^j$, $b^j$, $c$ represent distinct elements of $Q$. 

Observe that for all $j\in\integer$ we have $x^j\rt c = x^{j+1}\in X$ and $x^j\lt c=x^{j-1}\in X$. To show that $c\rt x^j\in X$ for all $j$ we prove by induction that $c\rt x^j=c$ for all $j$. The base case $j=0$ is given by the presentation, and for $j>0$ we have
\begin{align*}
c\rt x^j &= c\rt (x^{j-1}\rt c) \\
         &= (c\rt c)\rt (x^{j-1}\rt c) & &\text{(idempotence)} \\
         &= (c\rt x^{j-1})\rt c && \text{(right self-distributivity)} \\
         &= c\rt c && \text{(inductive hypothesis)} \\
         &= c.
\end{align*} 
The case $j<0$ follows similarly, using the right distributivity of $\lt$ over $\rt$ (Lemma~\ref{lem:distributivity}):
\begin{align*}
c\rt x^j &= c\rt (x^{j+1}\lt c) \\
         &= (c\lt c)\rt (x^{j+1}\lt c) & &\text{(idempotence)} \\
         &= (c\rt x^{j+1})\lt c && \text{(right distributivity)} \\
         &= c\lt c && \text{(inductive hypothesis)} \\
         &= c.
\end{align*}
Using the result $c\rt x^j=c$ just proved we now have
\begin{equation}
\label{eq:fixed}
c\lt x^j = (c\rt x^j)\lt x^j = c\in X.
\end{equation}

For the remaining products we show that $x^j\rt y^k=x^j$ and $x^j\lt y^k=x^j$ for $x,y\in\{a,b\}$ and all $j,k\in\integer$. As seen in equation~\eqref{eq:fixed} $u\rt v=u$ implies $u\lt v = u$, so it is enough to show that $x^j\rt y^k=x^j$. This is done by induction on $j$ and then $k$, with the idempotence axiom or the presentation providing the base case $j=k=0$, according to whether or not $x=y$. 
Inducting on $j>0$ we have 
\begin{align*}
x^j\rt y &= (x^{j-1}\rt c)\rt y \\
         &= (x^{j-1}\rt y)\rt (c\rt y) && \text{(distributivity)} \\
         &= x^{j-1}\rt c && \text{(inductive hypothesis)} \\
         &= x^j;
\end{align*} 
a similar calculation for $j<0$ establishes $x^j\rt y = x^j$ for all $j$. 
Using this and inducting on $k>0$ we then have
\begin{align*}
x^j\rt y^k &= (x^{j-1}\rt c)\rt (y^{k-1}\rt c) \\
         &= (x^{j-1}\rt y^{k-1})\rt c && \text{(distributivity)} \\
         &= x^{j-1}\rt c && \text{(inductive hypothesis)} \\
         &= x^j.
\end{align*}
A similar calculation for $k<0$ completes the proof that 
$x^j\rt y^k=x^j$ for $x,y\in\{a,b\}$ and all $j,k\in\integer$.

Our work above completes the proof that $S=X$, and shows moreover that the quandle operation in $S$ is given by
\begin{align*}
a^j \rt a^k &= a^j, & a^j \rt b^k &= a^j, & a^j \rt c &= a^{j+1}, \\
b^j \rt a^k &= b^j, & b^j \rt b^k &= b^j, & b^j \rt c &= b^{j+1}, \\
c \rt a^k   &= c,   & c \rt b^k      &= c,   &   c \rt c &= c.
\end{align*}
Now consider the map $\phi:\binseq_0\to Q$ given by
\begin{align*}
\ell^j(\alpha) &\mapsto a^j, \\
\ell^j(\beta)  &\mapsto b^j, \\
\gamma &\mapsto c.
\end{align*}
This map is well defined, because for $\mu,\nu\in\{\alpha,\beta\}$ and $j,k\in\integer$ we have $\ell^j(\mu)=\ell^k(\nu)$ if and only if $\mu=\nu$ and $j=k$. It extends our set map $f:\{\alpha,\beta,\gamma\}\to\{a,b,c\}$, and using the table of quandle operations in $S$ above we can see that it is a quandle homomorphism. Moreover, any quandle homomorphism $\binseq_0\to Q$ extending $f$ is uniquely determined by the images $f(\alpha),f(\beta),f(\gamma)$, because $\{\alpha,\beta,\gamma\}$ is a generating set for $\binseq_0$. This completes the proof that $(\binseq_0,\rtb)$ has the required universal property of the quandle with presentation~\eqref{eq:B0presentation}.

\section{Congruences in weighted average quandles on $\rational$}
\label{sec:weightedaverage}

Our goal in this section is to completely characterise congruences in weighted average quandles on $\rational$. This will give us an infinite family of quandles $\mathcal{F}$ such that every quandle $Q\in\mathcal{F}$ has nontrivial $\rt$-congruences, nontrivial $\lt$-congruences, and nontrivial quandle congruences; and moreover, for each of the following mutually exclusive properties there exists a quandle $Q\in\mathcal{F}$ that satisfies the given property:
\begin{enumerate}
\item
Every $\rt$-congruence on $Q$ is a $\lt$-congruence, and conversely.
\item
Every $\rt$-congruence on $Q$ is a $\lt$-congruence, but there exist $\lt$-congruences on $Q$ that are not $\rt$-congruences.
\item
Every $\lt$-congruence on $Q$ is a $\rt$-congruence, but there exist $\rt$-congruences on $Q$ that are not $\lt$-congruences.
\item
There exists a $\rt$-congruence on $Q$ that is not a $\lt$-congruence, and a $\lt$-congruence on $Q$ that is not a $\rt$-congruence.
\end{enumerate}

The characterisation is given by the following theorem.

\begin{theorem}[Characterisation of congruences in \avgquandle\tau]
\label{thm:weightedavgcongruences}
Let $\avgquandle{\tau}$ be a nontrivial weighted average quandle on $\rational$ with weight $\tau\neq0,1$.
\begin{enumerate}
\item
\label{item:rtclassification}
A relation $\sim$ on \rational\ is a $\rt$-congruence on \avgquandle{\tau}\ if and only if there exists a subgroup $D$ of \rational\ that is closed under multiplication by $\tau$ such that
\[
x\sim y \qquad\Leftrightarrow\qquad y-x\in D.
\]
\item
\label{item:ltclassification}
A relation $\sim$ on \rational\ is a $\lt$-congruence on \avgquandle{\tau}\ if and only if there exists a subgroup $D$ of \rational\ that is closed under multiplication by $\tau^{-1}$ such that
\[
x\sim y \qquad\Leftrightarrow\qquad y-x\in D.
\]
\end{enumerate}
\end{theorem}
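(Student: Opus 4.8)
The plan is to establish part~(1) directly, and then obtain part~(2) by passing to the dual quandle. By Definition~\ref{defn:alexander}, the inverse operation $\lt$ on $\avgquandle{\tau}$ is the weighted average quandle operation with weight $\tau^{-1}$, so $(\rational,\lt)=\avgquandle{\tau^{-1}}$, which is again a nontrivial weighted average quandle on $\rational$ since $\tau\neq 1$. A relation $\sim$ is a $\lt$-congruence on $\avgquandle{\tau}$ precisely when it is a $\rt$-congruence on $\avgquandle{\tau^{-1}}$, so part~(2) follows immediately from part~(1) applied to $\avgquandle{\tau^{-1}}$. Hence I concentrate on part~(1).

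The ``if'' direction is a routine computation. If $D\leq\rational$ is a subgroup closed under multiplication by $\tau$, then $\sim_D$ defined by $x\sim_D y\Leftrightarrow y-x\in D$ is an equivalence relation, and whenever $c-a\in D$ and $d-b\in D$ we have
\[
(c\rt d)-(a\rt b)=\tau(c-a)+(1-\tau)(d-b)=\tau(c-a)+(d-b)-\tau(d-b)\in D,
\]
so $\sim_D$ is a $\rt$-congruence.

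For the ``only if'' direction, let $\sim$ be a $\rt$-congruence on $\avgquandle{\tau}$ and put $D=[0]$, the $\sim$-class of $0$. The crucial step is to show that $D$ is a subgroup of $\rational$ closed under multiplication by $\tau$, after which verifying that $\sim$ coincides with $\sim_D$ is short. The key observation is that translations by elements of $\tau D$ and of $(1-\tau)D$ preserve $\sim$: applying the congruence condition to the pairs $z\sim z$ and $e\sim 0$ (with $e\in D$) gives $z\rt e\sim z\rt 0$, that is, $\tau z+(1-\tau)e\sim\tau z$; since $\tau$ is a unit of $\rational$, as $z$ ranges over $\rational$ this yields $w+(1-\tau)e\sim w$ for all $w\in\rational$. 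Taking the two pairs in the opposite order gives $e\rt z\sim 0\rt z$, that is, $\tau e+(1-\tau)z\sim(1-\tau)z$, and since $1-\tau$ is a unit of $\rational$ (this is where $\tau\neq 1$ enters) this yields $w+\tau e\sim w$ for all $w\in\rational$. Using transitivity and symmetry of $\sim$, it follows that $w+g\sim w$ for all $w\in\rational$ and all $g$ in the subgroup $G\leq\rational$ generated by $\tau D\cup(1-\tau)D$. Setting $w=0$ shows $G\subseteq D$; conversely every $e\in D$ satisfies $e=\tau e+(1-\tau)e\in G$, so $D=G$. Thus $D$ is a subgroup of $\rational$, and $\tau D\subseteq G=D$ shows it is closed under multiplication by $\tau$. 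It remains to identify $\sim$ with $\sim_D$. If $y-x\in D$, then taking $g=y-x$ and $w=x$ above gives $y=x+(y-x)\sim x$. Conversely, if $x\sim y$, applying the congruence condition to the pairs $x\sim y$ and $z\sim z$ with $z$ chosen so that $(1-\tau)z=-\tau x$ gives $\tau(y-x)\sim 0$, and applying it to $z\sim z$ and $x\sim y$ with $z$ chosen so that $\tau z=-(1-\tau)x$ gives $(1-\tau)(y-x)\sim 0$; since $D$ is a subgroup this forces $y-x=\tau(y-x)+(1-\tau)(y-x)\in D$.

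The step I expect to be the main obstacle is exactly the identification of $D=[0]$ as a subgroup of $\rational$ closed under multiplication by $\tau$. A naive attempt to verify closure of $D$ under addition and under negation seems to run into circularity; the device that resolves it is to pass to the subgroup $G$ generated by $\tau D\cup(1-\tau)D$ --- whose elements are visibly translations that preserve $\sim$ --- and then to use the identity $\tau+(1-\tau)=1$ to force $G=D$. With that in hand, the ``if'' direction, the identification of $\sim$ with $\sim_D$, and the reduction of part~(2) to part~(1) are all straightforward.
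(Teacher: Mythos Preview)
Your proof is correct and follows essentially the same approach as the paper: reduce part~(2) to part~(1) by duality, dispatch the ``if'' direction by direct computation, and for the ``only if'' direction use the congruence property to show that $w\sim w+\tau d$ and $w\sim w+(1-\tau)d$ for all $w$, then combine via $\tau+(1-\tau)=1$. The only packaging difference is that the paper takes $D=\{y-x:x\sim y\}$ (making $x\sim y\Rightarrow y-x\in D$ tautological), whereas you take $D=[0]$ and introduce the auxiliary subgroup $G=\langle\tau D\cup(1-\tau)D\rangle$ to force $D$ to be a group; this necessitates your short extra step at the end, but the underlying mechanism is identical.
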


Since the dual quandle to the weighted average quandle \avgquandle\tau\ is the weighted average quandle \avgquandle{\tau^{-1}}, and taking the dual exchanges the roles of $\rt$ and $\lt$, it suffices to prove part~\eqref{item:rtclassification} of Theorem~\ref{thm:weightedavgcongruences}. Part~\eqref{item:ltclassification} then follows immediately as a corollary. We begin by proving that a subgroup of \rational\ that is closed under multiplication by $\tau$ defines a $\rt$-congruence on \avgquandle{\tau}.

\begin{proposition}
\label{prop:congruencefromZtsubmodule}
Let $\avgquandle{\tau}$ be a nontrivial weighted average quandle on $\rational$ with weight $\tau\neq0,1$, and let $D$ be a subgroup of \rational\ that is closed under multiplication by $\tau$. Then the relation $\sim$ on \rational\ defined by 
\[
x\sim y \qquad\Leftrightarrow\qquad y-x\in D
\]
is a $\rt$-congruence on \avgquandle{\tau}.
\end{proposition}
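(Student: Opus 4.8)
The plan is to check directly that $\sim$ is an equivalence relation and then that it satisfies the $\rt$-congruence condition. The first part is routine and follows purely from $D$ being a subgroup of $(\rational,+)$: reflexivity from $x-x=0\in D$, symmetry from $y-x\in D\Rightarrow x-y=-(y-x)\in D$, and transitivity from $(z-y)+(y-x)\in D$. I would dispatch this in a sentence.

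For the congruence condition, suppose $a\sim c$ and $b\sim d$, i.e.\ $c-a\in D$ and $d-b\in D$, and the goal is to show $a\rt b\sim c\rt d$, i.e.\ $(c\rt d)-(a\rt b)\in D$. Expanding the weighted-average operation $x\rt y=\tau x+(1-\tau)y$ gives
\[
(c\rt d)-(a\rt b) = \bigl(\tau c+(1-\tau)d\bigr)-\bigl(\tau a+(1-\tau)b\bigr) = \tau(c-a)+(1-\tau)(d-b),
\]
so everything reduces to showing this element lies in $D$.

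The one point worth highlighting — and the only thing resembling an obstacle — is that $D$ is assumed closed only under multiplication by $\tau$, not under multiplication by $\tau^{-1}$, so it need not be a full $\zttinv$-submodule. Nevertheless, being additively closed, $D$ is automatically closed under multiplication by $1-\tau$: for any $e\in D$ we have $\tau e\in D$ and hence $(1-\tau)e=e-\tau e\in D$. Applying this with $e=c-a$ and $e=d-b$ shows that both summands above lie in $D$, hence so does their sum, giving $a\rt b\sim c\rt d$. This completes the verification that $\sim$ is a $\rt$-congruence. The same bookkeeping with $\tau$ replaced by $\tau^{-1}$ will give the analogous statement needed for $\lt$-congruences in part~\eqref{item:ltclassification} of Theorem~\ref{thm:weightedavgcongruences}.
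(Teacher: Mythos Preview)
Your proof is correct and follows essentially the same approach as the paper: both expand $(c\rt d)-(a\rt b)=\tau(c-a)+(1-\tau)(d-b)$ and use that $D$ is a subgroup closed under multiplication by $\tau$. You spell out the step $(1-\tau)e=e-\tau e\in D$ that the paper leaves implicit, but otherwise the arguments are the same.
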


\begin{proof}
The relation $\sim$ is the relation defining the cosets of $D$. This is known or easily checked to be an equivalence relation, so we verify the $\rt$-congruence condition.

Let $a,b,c,d\in\rational$ with $a\sim c$ and $b\sim d$. Then 
\begin{align*}
(c\rt d)-(a\rt b) &= (\tau c + (1-\tau)d)-(\tau a +(1-\tau)b) \\
                  &= \tau(c-a)+(1-\tau)(d-b),
\end{align*}
which belongs to $D$ because $c-a,d-b\in D$, and $D$ is a subgroup closed under multiplication by $\tau$. We conclude that $a\rt b\sim c\rt d$, so $\sim$ is a $\rt$-congruence, as claimed.
\end{proof}

We now turn to the more involved task of showing that every $\rt$-congruence 
on \avgquandle\tau\ comes from a subgroup of \rational\ that is closed under multiplication by $\tau$.

\begin{proposition}
\label{prop:submodulefromcongruence}
Let $\avgquandle{\tau}$ be a nontrivial weighted average quandle on \rational\ with weight $\tau\neq0,1$, and let $\sim$ be a $\rt$-congruence. Let 
\[
D=\{y-x:x,y\in\rational,x\sim y\}.
\]
Then $D$ is a subgroup of \rational\ that is closed under multiplication by $\tau$, and  
\[
x\sim y \qquad\Leftrightarrow\qquad y-x\in D.
\]
\end{proposition}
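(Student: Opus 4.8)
The plan is to prove a stronger statement first, namely that $\sim$ is \emph{translation invariant}: whenever $x\sim y$ we have $x+t\sim y+t$ for every $t\in\rational$. Equivalently, writing $D^{*}=\{d\in\rational:z\sim z+d\text{ for all }z\in\rational\}$ for the set of ``universal shifts'', I will show that $D^{*}=D$. Once this is established the proposition follows quickly: $D^{*}$ is visibly a subgroup of $(\rational,+)$, since it contains $0$ by reflexivity, it is closed under negation (if $z\sim z+d$ for all $z$ then, replacing $z$ by $z-d$ and using symmetry, $z\sim z-d$ for all $z$), and it is closed under addition by transitivity, $z\sim z+d\sim z+d+d'$; and the equivalence $x\sim y\Leftrightarrow y-x\in D$ holds because $y-x\in D^{*}$ gives $x\sim x+(y-x)=y$, while the reverse implication is the definition of $D$. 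The inclusion $D^{*}\subseteq D$ is also immediate, since $d\in D^{*}$ gives $0\sim d$, so the only real work is to show $D\subseteq D^{*}$.

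The engine of the argument is that the $\rt$-congruence condition produces two families of $\sim$-preserving affine maps. Because $\tau\neq1$ and $\tau$ is a unit of $\rational$, both $1-\tau$ and $\tau$ are nonzero. For each $k\in\rational$ choose $w$ with $(1-\tau)w=k$; applying the congruence condition to $x\sim y$ and $w\sim w$ shows that $g_{k}\colon x\mapsto\tau x+k=x\rt w$ preserves $\sim$. Symmetrically, choosing $w$ with $\tau w=k$ and applying the condition to $w\sim w$ and $x\sim y$ shows that $h_{k}\colon x\mapsto(1-\tau)x+k=w\rt x$ preserves $\sim$. Now take any $d\in D$, say $x\sim y$ with $y-x=d$. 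Applying $g_{k}$ gives $\tau x+k\sim\tau y+k=(\tau x+k)+\tau d$ for every $k$; as $\tau x+k$ runs over all of $\rational$, this says $z\sim z+\tau d$ for all $z$, i.e.\ $\tau d\in D^{*}$. The maps $h_{k}$ give $(1-\tau)d\in D^{*}$ in the same way. Since $D^{*}$ is a subgroup and $d=\tau d+(1-\tau)d$, we conclude $d\in D^{*}$, proving $D\subseteq D^{*}$ and hence $D=D^{*}$.

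Finally I would record that $D$ is closed under multiplication by $\tau$: for $d\in D$ the argument above already gives $\tau d\in D^{*}=D$ (alternatively, applying the congruence condition to $x\sim y$ and $0\sim0$ yields $\tau x\sim\tau y$, so $\tau(y-x)\in D$). The step I expect to be the crux is recognising that one should aim for translation invariance of $\sim$ at all, together with the observation that a single difference $d$ splits as $\tau d+(1-\tau)d$ with both summands forced into the subgroup $D^{*}$; once that is seen, the remainder is bookkeeping with the equivalence-relation axioms and the fact that $\tau$ and $1-\tau$ are units of $\rational$.
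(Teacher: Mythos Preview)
Your proof is correct and follows essentially the same approach as the paper: both arguments show that for $d\in D$ the elements $\tau d$ and $(1-\tau)d$ are ``universal shifts'' (your $D^{*}$) by applying the congruence condition to $x\rt w$ and $w\rt x$ respectively, and then combine them to get $d=\tau d+(1-\tau)d$ as a universal shift. Your explicit introduction of $D^{*}$ and the families $g_k$, $h_k$ is a clean repackaging, but the mathematical content---including the reliance on $\tau,1-\tau\neq 0$ to solve for $w$---is identical to the paper's choice of specific $u$ and $v$.
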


\begin{proof}
Clearly $D\neq\emptyset$, because $a\sim a$ for all $a$ and so $0=a-a\in D$. 
Given $d\in D$ let $x,y\in \rational$ be such that $x\sim y$ and $y-x=d$. Given $a\in\rational$, there exists $u\in\rational$ such that $(1-\tau)u=a-\tau x$; explicitly, we have of course
\begin{equation}
\label{eq:explicitsolution}
u = \frac{a-\tau x}{1-\tau}.
\end{equation}
Then
\begin{align*}
x\rt u &= \tau x +(1-\tau)u=\tau x+a-\tau x = a, \\
y\rt u &= (x+d)\rt u \\
       &=\tau(x+d) + (1-\tau)u \\
       &=\tau(x+d)+a-\tau x = a + \tau d.              
\end{align*}
Since $\sim$ is a $\rt$-congruence we have
$x\rt u\sim y\rt u$, and we conclude that $a\sim a+\tau d$ for all $a\in\rational$ and $d\in D$. This shows also that $\tau d\in D$ for all $d\in D$, that is, $D$ is closed under multiplication by $\tau$.

Next, let
\[
v = \tau^{-1}(a-(1-\tau)x).
\]
Then 
\begin{align*}
v \rt x &= \tau\tau^{-1}(a-(1-\tau)x)+(1-\tau)x = a, \\
v \rt y &= v \rt (x+d)  \\
        &= \tau\tau^{-1}(a-(1-\tau)x)+(1-\tau)(x+d) = a+(1-\tau)d.
\end{align*}
Since $\sim$ is a $\rt$-congruence we have
$v\rt x\sim v\rt y$, and we conclude that $a\sim a+(1-\tau)d$ for all $a\in\rational$ and $d\in D$.

Combining the results just proved we obtain
\[
a \sim a+\tau d \sim (a+\tau d)+(1-\tau)d = a+d.
\]
Hence $a\sim a+d$ for all $a\in\rational$ and $d\in D$. Conversely, if $a\sim b$ for some $a,b\in\rational$ then $b-a\in D$ by definition of $D$. We conclude that
\[
x\sim y \qquad\Leftrightarrow\qquad y-x\in D,
\]
as claimed.

To complete the proof we show that $D$ is a subgroup, by showing that it is closed under subtraction. Given $d_1,d_2\in D$, take $x\in\rational$. Then from above we have $x\sim x+d_1$ and $x\sim x+d_2$, so $x+d_1\sim x+d_2$ by symmetry and transitivity. Hence
\[
d_2-d_1 = (x+d_2)-(x+d_1) \in D,
\]
showing that $D$ is a subgroup. 
\end{proof}

\begin{remark}
Notice that the calculation following~\eqref{eq:explicitsolution} does not require the explicit value of $u$, but only the existence of a solution $u$ to the equation $(1-\tau)u = a-\tau x$. The remainder of the argument uses only the fact that $\tau$ is a unit. Thus the proof goes through as written to prove the corresponding result in any Alexander quandle where $1-t$ acts surjectively. We discuss this in Section~\ref{sec:generalAlexanderquandles} below.
\end{remark}

Closure of a subgroup under multiplication by $\tau$ can be more conveniently expressed in terms of the denominator of $\tau$:

\begin{lemma}
\label{lem:closure}
Let $\tau\in\rational$ with $\tau\neq 0$. Write $\tau=\frac{p}q$, where $p,q\in\integer$ with $\gcd(p,q)=1$. A subgroup $D$ of \rational\ is closed under multiplication by $\tau$ if and only if it is closed under multiplication by $\frac1q$.
\end{lemma}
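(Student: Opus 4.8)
The plan is to prove the two directions of the biconditional separately, exploiting the fact that $D$ is assumed to be a \emph{subgroup} of $\rational$ under addition, so it is automatically closed under multiplication by any integer. The key number-theoretic input is that $\gcd(p,q)=1$, so there exist integers $m,n$ with $mp+nq=1$.

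First I would show that closure under multiplication by $\frac1q$ implies closure under multiplication by $\tau=\frac pq$. This direction is immediate: if $d\in D$ then $\frac1q d\in D$ by hypothesis, and since $D$ is an additive subgroup it is closed under multiplication by the integer $p$, so $\tau d = p\cdot\bigl(\frac1q d\bigr)\in D$.

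For the converse, suppose $D$ is closed under multiplication by $\tau=\frac pq$. Given $d\in D$, I want to show $\frac1q d\in D$. Using B\'ezout, pick $m,n\in\integer$ with $mp+nq=1$. Then
\[
\frac1q d = \frac{mp+nq}{q}d = m\cdot\frac pq d + n\cdot d = m(\tau d) + nd.
\]
Now $\tau d\in D$ by hypothesis, and $D$ is an additive subgroup, so $m(\tau d)\in D$ and $nd\in D$, hence their sum $\frac1q d$ lies in $D$. This completes the proof.

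I do not anticipate a genuine obstacle here; the only subtlety worth flagging is that the argument uses in an essential way that $D$ is a subgroup (so that scaling by arbitrary integers is free) rather than merely a subset, and that $p$ and $q$ are coprime (so that B\'ezout applies) — without coprimality the statement would fail, since closure under $\frac pq$ is strictly weaker than closure under $\frac1q$ when $\gcd(p,q)>1$. I would also note in passing that $\tau\neq0$ guarantees $p\neq0$, though the argument as written only needs $q\neq0$, which holds since $q$ is a denominator.
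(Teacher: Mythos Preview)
Your proof is correct and essentially identical to the paper's own argument: both directions use that a subgroup is automatically closed under integer scaling, and the nontrivial direction is handled via B\'ezout's identity to write $\frac1q d = m\tau d + nd$ (the paper uses the letters $k,\ell$ in place of your $m,n$).
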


\begin{proof}
If $D$ is closed under multiplication by $\frac1q$, then it is certainly closed under multiplication by $\tau=p\cdot\frac1q$. So suppose $D$ is closed under multiplication by $\tau$. Since $\gcd(p,q)=1$ there exist $k,\ell\in\integer$ such that $kp+\ell q=1$. Given $d\in D$ we have
\[
\frac1q\cdot d = \left(k\frac{p}q+\ell\right)d = k\tau d +\ell d\in D,
\]
so $D$ is closed under multiplication by $\frac1q$.
\end{proof}

We can now show that the family of quandles 
\[
\mathcal{F}=\{\avgquandle\tau: \tau\in\rational,\tau\neq0,1\}
\]
has the properties listed at the start of the section.

\begin{theorem}
Let $\avgquandle\tau$ be a nontrivial weighted average quandle on \rational\ with weight $\tau\neq0,1$. Then $\avgquandle\tau$ has nontrivial $\rt$-congruences, nontrivial $\lt$-congruences, and nontrivial quandle congruences. Moreover:
\begin{enumerate}
\item
If $\tau=-1$ then a relation on $\rational$ is a $\rt$-congruence if and only if it is a $\lt$-congruence.
\item
If $\tau^{-1}\in\integer$, $\tau\neq-1$, then every $\rt$-congruence on $\avgquandle\tau$ is a $\lt$-congruence, but not every $\lt$-congruence is a $\rt$-congruence.
\item
If $\tau\in\integer$, $\tau\neq-1$, then every $\lt$-congruence on $\avgquandle\tau$ is a $\rt$-congruence, but not every $\rt$-congruence is a $\lt$-congruence.
\item
If $\tau,\tau^{-1}\notin\integer$ then there exists a $\rt$-congruence on \avgquandle\tau\ that is not a $\lt$-congruence, and a $\lt$-congruence that is not a $\rt$-congruence.
\end{enumerate}
\end{theorem}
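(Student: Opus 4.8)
The plan is to deduce everything from Theorem~\ref{thm:weightedavgcongruences} and Lemma~\ref{lem:closure}, which between them reduce the problem to elementary statements about subgroups of $\rational$ and divisibility. Write $\tau = p/q$ with $p,q$ coprime integers and $q>0$; nontriviality of $\avgquandle\tau$ means $\tau\neq 0,1$. Combining the two cited results, the $\rt$-congruences on $\avgquandle\tau$ are exactly the coset relations $x\sim y\Leftrightarrow y-x\in D$ of subgroups $D\leq\rational$ closed under multiplication by $1/q$; and, since $\tau^{-1}=q/p$ is then also in lowest terms, applying Lemma~\ref{lem:closure} to the weight $\tau^{-1}$ shows the $\lt$-congruences are exactly the coset relations of subgroups closed under multiplication by $1/|p|$ (the sign of $p$ being irrelevant, as subgroups of $\rational$ are closed under negation). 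Once this translation is in place, each clause of the theorem becomes a short exercise in choosing a suitable subgroup.

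For the opening sentence I would produce a single subgroup handling all three existence claims at once: take $D=\integer[1/(pq)]=\{a/(pq)^n:a\in\integer,\ n\geq 0\}$. It is closed under multiplication by $1/q$ and by $1/|p|$, because $q\cdot(pq)^n$ and $|p|\cdot(pq)^n$ each divide $(pq)^{n+1}$, so by the translation its coset relation is simultaneously a $\rt$-congruence and a $\lt$-congruence, hence a quandle congruence; and it is nontrivial, since $1\in D$ while $1/r\notin D$ for any prime $r\nmid pq$ (and infinitely many primes guarantees such an $r$). This yields nontrivial $\rt$-, $\lt$- and quandle congruences in one stroke.

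For the four mutually exclusive cases I would argue as follows. Case (1): when $\tau=-1$ we have $\tau^{-1}=-1=\tau$, so ``closed under multiplication by $\tau$'' and ``closed under multiplication by $\tau^{-1}$'' are the very same condition, whence a relation on $\rational$ is a $\rt$-congruence if and only if it is a $\lt$-congruence. Case (2): $\tau^{-1}\in\integer$ together with $\tau\neq\pm 1$ forces $|p|=1$ and $q\geq 2$; then $1/|p|=1$, so \emph{every} coset relation of a subgroup of $\rational$ is a $\lt$-congruence, and in particular every $\rt$-congruence is, while $D=\integer$ furnishes a $\lt$-congruence that is not a $\rt$-congruence because $\integer$ is not closed under multiplication by $1/q$. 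Case (3) is the dual of Case (2): the dual quandle of $\avgquandle\tau$ is $\avgquandle{\tau^{-1}}$ and dualising swaps $\rt$ with $\lt$, and $\tau\in\integer\setminus\{0,1,-1\}$ makes $\tau^{-1}$ satisfy the hypotheses of Case (2); alternatively one repeats the Case (2) argument with the roles of $p$ and $q$ exchanged. Case (4): $\tau,\tau^{-1}\notin\integer$ gives $q\geq 2$ and $|p|\geq 2$ with $\gcd(|p|,q)=1$, and then $\integer[1/q]$ is closed under multiplication by $1/q$ but not by $1/|p|$ — a relation $1/|p|=a/q^n$ would force $|p|\mid q^n$, impossible by coprimality — so its coset relation is a $\rt$-congruence but not a $\lt$-congruence, and symmetrically $\integer[1/|p|]$ gives a $\lt$-congruence that is not a $\rt$-congruence.

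I do not expect a genuine obstacle here: the mathematical content sits in Theorem~\ref{thm:weightedavgcongruences} and Lemma~\ref{lem:closure}, and this theorem is essentially their corollary. The only points requiring attention are organisational — correctly matching the four integrality hypotheses on $\tau$ to the conditions $q=1$, $|p|=1$, or neither; handling the sign of $p$ so that $\tau=-1$ falls out uniformly; and the two elementary number-theoretic facts used, namely that $\gcd(|p|,q)=1$ with $|p|,q\geq 2$ blocks both $|p|\mid q^n$ and $q\mid|p|^n$, and that there is always a prime not dividing $pq$ (needed only for the nontriviality claim in the first sentence).
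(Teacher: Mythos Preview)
Your proposal is correct and follows essentially the same route as the paper: both reduce everything to Theorem~\ref{thm:weightedavgcongruences} and Lemma~\ref{lem:closure}, and then exhibit the same subgroups $\integer[1/q]$, $\integer[1/|p|]$ (the paper's $D_q$, $D_p$) and $\integer$ for the various cases. The only cosmetic difference is that for the opening existence claims you use the single subgroup $\integer[1/(pq)]$ where the paper uses $D_q$, $D_p$, and $D_{p,q}$ separately --- but $D_{p,q}=\integer[1/(pq)]$ anyway, so this is the same construction packaged slightly more efficiently.
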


\begin{proof}
Write $\tau=\frac{p}q$ with $p,q\in\integer$ and $\gcd(p,q)=1$. By Theorem~\ref{thm:weightedavgcongruences} and Lemma~\ref{lem:closure}, $\rt$-congruences on \avgquandle\tau\ correspond to subgroups of \rational\ that are closed under multiplication by $\frac1q$; and $\lt$-congruences on \avgquandle\tau\ correspond to subgroups of \rational\ that are closed under multiplication by $\frac1p$.

Given a nonzero integer $r$, define
\[
D_r = \left\{\frac{k}{r^\ell}:k,\ell\in\integer,\ell\geq0\right\}.
\]
Then $D_r$ is a subgroup of $\rational$ that is closed under multiplication by $\frac1r$. Taking $r=q$ shows that $\avgquandle\tau$ has nontrivial $\rt$-congruences; taking $r=p$ shows that $\avgquandle\tau$ has nontrivial $\lt$-congruences; and similarly the subgroup
\[
D_{p,q} = \left\{\frac{k}{p^\ell q^m}:k,\ell,m\in\integer,\ell,m\geq0\right\}
\]
shows that $\avgquandle\tau$ has nontrivial quandle congruences.

We now consider each of the cases given in the theorem in turn.
\begin{enumerate}
\item
If $\tau=-1$ then $\tau=\tau^{-1}$, so by Theorem~\ref{thm:weightedavgcongruences} every $\rt$-congruence is a $\lt$-congruence, and conversely. (This also follows more simply from the fact that $\rt$ and $\lt$ coincide for $\tau=-1$.)
\item
If $\tau^{-1}\in\integer$ with $\tau\neq-1$ then $|p|=1$ and $|q|>1$. Every subgroup of $\rational$ is closed under multiplication by $\frac1p$, so every 
$\rt$-congruence on $\avgquandle\tau$ is a $\lt$-congruence; but the subgroup $\integer$ of \rational\ is closed under multiplication by $\frac1p$ but not $\frac1q$, so defines a $\lt$-congruence that is not a $\rt$-congruence.
\item
If $\tau\in\integer$ with $\tau\neq-1$ then $|q|=1$ and $|p|>1$. The conclusion follows as in the previous case, with the roles of $p$ and $q$ reversed.
\item
If $\tau,\tau^{-1}\notin\integer$ then $|p|,|q|>1$. The subgroup $D_q$ is closed under multiplication by $\frac1q$ but not $\frac1p$, so defines a $\rt$-congruence that is not a $\lt$-congruence; while the subgroup $D_p$ is closed under multiplication by $\frac1p$ but not $\frac1q$, so defines a $\lt$-congruence that is not a $\rt$-congruence.
\end{enumerate}
\end{proof}

\section{Discussion: Congruences in general Alexander quandles}
\label{sec:generalAlexanderquandles}

We briefly discuss the extent to which the results of Section~\ref{sec:weightedaverage} extend to general Alexander quandles. Our work yields a complete characterisation of quandle congruences in connected Alexander quandles.

In Proposition~\ref{prop:congruencefromZtsubmodule} we showed that a subgroup $D$ of \rational\ that is closed under multiplication by $\tau$ defines a $\rt$-congruence in \avgquandle{\tau}. This result may be restated as saying that a $\zt$-submodule of $\avgquandle{\tau}$ defines a $\rt$-congruence in $\avgquandle{\tau}$.

The proof of Proposition~\ref{prop:congruencefromZtsubmodule} goes through unchanged to prove the analogous result in a general Alexander quandle:
\begin{proposition}
\label{prop:congruence-from-submodule}
Let $A$ be an Alexander quandle, and let $D$ be a \zt-submodule of $A$. Then the relation $\sim$ defined on $A$ by
\begin{equation}
\label{eq:cosetrelation}
x\sim y \qquad\Leftrightarrow\qquad y-x\in D
\end{equation}
is a $\rt$-congruence on $A$.
\end{proposition}

It then follows that a $\integer[t^{-1}]$-submodule of $A$ defines a $\lt$-congruence on $A$, and a \zttinv-submodule defines a quandle congruence on $A$. Moreover, it is easily checked that a quotient quandle $A/{\sim}$ of an Alexander quandle is again an Alexander quandle with respect to the induced $\zttinv$-module structure given by
\begin{align*}
[x]+[y] &= [x+y], & p(t)[x] = [p(t)x],
\end{align*}
if and only if $\sim$ is the coset relation of a \zttinv-submodule $D$ of $A$.

In Proposition~\ref{prop:submodulefromcongruence} we further showed that every $\rt$-congruence in $\avgquandle{\tau}$ comes from a $\zt$-submodule of $\avgquandle{\tau}$. However, the proof of this result used the fact that $(1-\tau)\rational=\rational$, and so to prove the corresponding result in an Alexander quandle $A$ we require $1-t\in\zttinv$ to act surjectively on $A$. 
The proof of Proposition~\ref{prop:submodulefromcongruence} goes through essentially as written to give us the following result:
\begin{proposition}
\label{prop:congruences-in-connected-case}
Let $A$ be an Alexander quandle such that $(1-t)A=A$, and let $\sim$ be a $\rt$-congruence on $A$. Let 
\[
D=\{y-x:x,y\in A,x\sim y\}.
\]
Then $D$ is a $\zt$-submodule of $A$, and
\[
x\sim y \qquad\Leftrightarrow\qquad y-x\in D.
\]
\end{proposition}
If $Q$ is a quandle and $x\in Q$, then the \textbf{orbit} of $x$ is the orbit of $x$ under the action of the translation group $\langle \rght{y} : y\in Q\rangle$, the subgroup of the symmetric group $S_Q$ generated by the right translation maps. A quandle is \textbf{connected} if it consists of a single orbit. In the case of Alexander quandles it is a known fact (see for example~\cite{hulpke-et-al-2016}) that the orbit of $x\in A$ is the coset $x+(1-t)A$. This is easily verified as follows: for any $a,x,y\in A$ we have
\begin{align*}
(x+(1-t)a)\rt y &= t(x+(1-t)a)+(1-t)y \\ 
                &= x+(1-t)(ta+y-x), \\
(x+(1-t)a)\lt y &= t^{-1}(x+(1-t)a)+(1-t^{-1})y  \\
                &= x+(1-t)t^{-1}(a+x-y),
\end{align*}
so $x+(1-t)A$ is invariant under the action of the translation group; and moreover
\[
(x\lt 0)\rt a = t(t^{-1}x) + (1-t)a = x+(1-t)a,
\]
so $x+(1-t)A$ is contained in the orbit of $x$. 
Therefore an Alexander quandle is connected if and only if $(1-t)A=A$. Thus, Propositions~\ref{prop:congruence-from-submodule}
and~\ref{prop:congruences-in-connected-case} together characterise quandle congruences in connected Alexander quandles: they correspond precisely to the coset relations of \zttinv-submodules.

We conclude with an example of a quandle congruence in an Alexander quandle that is not connected that does not arise from a $\zttinv$-submodule.

\begin{example}
Let $A=\zttinv$ with the natural Alexander quandle structure that comes from regarding \zttinv\ as a module over itself in the natural way. Let $\equiv$ be an arbitrary equivalence relation on \integer, and define the equivalence relation $\sim$ on $A$ by
\[
f\sim g \qquad \Leftrightarrow\qquad f(1)\equiv g(1).
\]
For any $f,g\in A$ we have 
\[
(f\rt g)(1) = (tf+(1-t)g)(1) = f(1),
\]
and similarly $(f\lt g)(1)=f(1)$. Therefore $f\rt g\sim f\sim f\lt g$ for all $f,g\in A$, and it follows that $\sim$ is a quandle congruence on $A$. 
However, $\equiv$ can be chosen so that the equivalence classes of $\sim$ are not the cosets of any \zttinv-submodule of $A$: for a concrete example, we may define $\equiv$ on \integer\ by $x\equiv y$ if and only if $x=y$ or $\{x,y\}=\{0,1\}$.

As a second approach to this example we may proceed as follows. Let $Z=\{f\in\zttinv : f(1)=0\}$, the kernel of the ring homomorphism $\zttinv\to\integer$ given by evaluation at $1\in\integer$. The ideal $Z$ is a \zttinv\-submodule of $A$, so by Proposition~\ref{prop:congruence-from-submodule} the equivalence relation $\simeq$ on $A$ defined by 
\begin{align*}
f\simeq g \qquad&\Leftrightarrow \qquad f-g\in Z \\
                &\Leftrightarrow \qquad f(1)=g(1)
\end{align*}
is a quandle congruence on \zttinv. The quotient quandle
$A/{\simeq}$ is easily seen to be isomorphic to \integer\ with the trivial quandle structure. Since the quandle structure on the quotient is trivial, any equivalence relation $\equiv$ on \integer\ will be a quandle congruence, and pulling $\equiv$ back to $A$ gives us the congruence $\sim$.
\end{example}

\end{document}